\theoremstyle{plain}
\newtheorem{theorem}{Theorem}[section]
\newtheorem{proposition}[theorem]{Proposition}
\newtheorem{corollary}[theorem]{Corollary}
\theoremstyle{definition}
\newtheorem{definition}[theorem]{Definition}
\newtheorem{example}[theorem]{Example}
\newtheorem{assumption}[theorem]{Assumption}
\theoremstyle{remark}
\newtheorem{remark}[theorem]{Remark}
\begin{document}

\title{Functional equivariance and conservation laws in numerical
  integration}

\author{Robert I.~McLachlan}
\address{School of Fundamental Sciences, Massey University}
\email{r.mclachlan@massey.ac.nz}

\author{Ari Stern}
\address{Department of Mathematics and Statistics,
  Washington University in St.~Louis}
\email{stern@wustl.edu}

\begin{abstract}
  Preservation of linear and quadratic invariants by numerical
  integrators has been well studied. However, many systems have linear
  or quadratic observables that are not invariant, but which satisfy
  evolution equations expressing important properties of the
  system. For example, a time-evolution PDE may have an observable
  that satisfies a local conservation law, such as the multisymplectic
  conservation law for Hamiltonian PDEs.

  We introduce the concept of \emph{functional equivariance}, a
  natural sense in which a numerical integrator may preserve the
  dynamics satisfied by certain classes of observables, whether or not
  they are invariant. After developing the general framework, we use
  it to obtain results on methods preserving local conservation laws
  in PDEs. In particular, integrators preserving quadratic invariants
  also preserve local conservation laws for quadratic observables, and
  symplectic integrators are multisymplectic.
\end{abstract}

\maketitle

\section{Introduction}

In numerical ordinary differential equations (ODEs), it is known that
all B-series methods (including Runge--Kutta methods) preserve linear
invariants, while only certain ones preserve quadratic
invariants. Linear invariants arising in physical systems include
mass, charge, and linear momentum; quadratic invariants include
angular momentum and other momentum maps, as well as the canonical
symplectic form for Hamiltonian systems. See \citet*{HaLuWa2006} and
references therein.

However, for partial differential equations (PDEs) describing time
evolution, it is desirable for a numerical integrator to preserve not
only global invariants but also local conservation laws. For instance,
the evolution may preserve total mass (a global invariant), but the
mass in a particular region may change by flowing through the boundary
of the region (a local conservation law). Another example is the
canonical multisymplectic conservation law for Hamiltonian PDEs, which
is a quadratic local conservation law for the variational
equation. Focusing only on global invariants overlooks this more
granular, local form of conservativity.

This paper develops a new framework for the preservation of such
properties by numerical integrators. We do so by answering a much more
general question: When does a numerical integrator preserve the
evolution of certain classes of observables (e.g., linear, quadratic),
even when those observables are not invariants?  This includes not
only global invariants, as previously studied, but also local
conservation laws and other balance laws encountered in both
conservative and dissipative dynamical systems.

The main idea of our approach is summarized as follows. Suppose
$ y = y (t) $ evolves in a (finite- or infinite-dimensional) Banach
space $Y$ according to $ \dot{y} = f (y) $. Given a functional
$ F \in C ^1 (Y) $, the chain rule implies that $ z = F (y) $ evolves
according to $ \dot{z} = F ^\prime (y) f (y) $. Now, if $\Phi$ is a
numerical integrator, let $ \Phi _f \colon y _0 \mapsto y _1 $ denote
its application to the original system $ \dot{y} = f (y) $, and let
$ \Phi _g \colon ( y _0 , z _0 ) \mapsto ( y _1, z _1 ) $ denote its
application to the augmented system
\begin{equation}
  \label{eqn:augmented}
  \dot{y} = f (y) , \qquad \dot{z} = F ^\prime (y) f (y) ,
\end{equation}
corresponding to the vector field
$ g ( y , z ) = \bigl( f (y) , F ^\prime (y) f (y) \bigr) $. We say
that $\Phi$ is \emph{$F$-functionally equivariant} if $ \Phi _g $
preserves the relation $ z = F (y) $, i.e.,
$ \Phi _g \colon \bigl( y _0, F ( y _0 ) \bigr) \mapsto \bigl( y _1 ,
F ( y _1 ) \bigr) $, for all vector fields $f$ on $Y$. In other words,
the following diagram commutes:
\begin{equation*}
  \begin{tikzcd}
    y _0 \ar[r, |->, "\Phi _f"] \ar[d, |->, "{(\mathrm{id}, F)}
    "'] & y _1 \ar[d, |->, "{(\mathrm{id}, F)}
    "]\\
    ( y _0, z _0 ) \ar[r, |->, "\Phi _g"] & ( y _1, z _1 ) \rlap{ .}
  \end{tikzcd}
\end{equation*}
This is weaker than equivariance in the usual sense, since the diagram
need only commute for \eqref{eqn:augmented}, not arbitrary
$(\mathrm{id}, F)$-related vector fields. Preserving invariants
becomes the special case where the augmented equation reads
$ \dot{z} = 0 $ and the integrator leaves $z$ constant.

We develop a theory of functional equivariance and show that it
provides a useful tool kit for understanding the behavior of
(especially affine and quadratic) observables, including local
conservation laws and multisymplecticity. The paper is organized as
follows:
\begin{itemize}
\item \cref{sec:fe} characterizes the functional equivariance of a
  large class of numerical integrators, including B-series methods,
  and explores some consequences for both conservative and
  non-conservative dynamical systems. The main result, \cref{thm:fe},
  shows that a method is functionally equivariant for a class of
  observables if and only if it preserves invariants in that class. In
  particular, all B-series methods are affine functionally
  equivariant, and those preserving quadratic invariants are quadratic
  functionally equivariant.

\item \cref{sec:conservation} applies this framework to local
  conservation laws for PDEs and spatially semidiscretized PDEs. In
  particular, affine/quadratic functionally equivariant
  numerical integrators are seen to preserve discrete-time local
  conservation laws for affine/quadratic observables.

\item \cref{sec:multisymplectic} applies this framework to the
  multisymplectic conservation law for canonical Hamiltonian PDEs and
  spatially semidiscretized PDEs. We show that multisymplectic
  semidiscretization in space, followed by a symplectic integrator in
  time, yields a multisymplectic method in spacetime. We also show
  that hybrid finite elements may be used for multisymplectic
  semidiscretization, generalizing the results of \citet{McSt2020} to
  time-evolution problems.

\item Finally, \cref{sec:additive_partitioned} extends the results
  from the class of methods considered in \cref{sec:fe} to additive
  and partitioned methods, including additive/partitioned Runge--Kutta
  methods and splitting/composition methods.
\end{itemize}
We remark that many of the results, particularly in \cref{sec:fe} and
\cref{sec:additive_partitioned}, are obtained using only the
equivariance properties of methods with respect to affine maps, rather
than representing them in terms of trees or Runge--Kutta tableaux. In
particular, \cref{thm:differentiation} gives a new, tree-free proof
that B-series methods are closed under differentiation, while
\cref{thm:differentiation_additive_partitioned} generalizes this to
additive and partitioned methods.

\section{Functional equivariance}
\label{sec:fe}

\subsection{Basic definitions and results}
\label{sec:basic}

Let $ \Phi $ be a one-step numerical integrator, whose application to
a vector field $ f \in \mathfrak{X} (Y) $ with time-step size
$\Delta t$ gives a map $ \Phi _{\Delta t,f} \colon Y \rightarrow Y $,
$ y _0 \mapsto y _1 $. All the methods we will consider have
$ \Phi _{ \Delta t, f } = \Phi _{ 1, \Delta t f } $, so it suffices to
consider integrator maps $ \Phi _f \coloneqq \Phi _{ 1, f } $ with
unit time step. When we refer to a numerical integrator, we mean the
entire collection of maps
$ \Phi = \bigl\{ \Phi _f : f \in \mathfrak{X} (Y) ,\ Y \text{ a Banach
  space} \bigr\} $.\footnote{For some methods, such as implicit
  Runge--Kutta methods, $ \Phi _{ \Delta t , f } (y) $ might only be
  defined for sufficiently small $ \Delta t $. Including such
  integrators requires only the minor modification of viewing
  $ \Phi _f $ as a partial function.}

\begin{remark}
  While this definition covers a large class of numerical integrators,
  including B-series methods, other classes of methods require
  additional data besides $f$ in order to define an integrator map,
  e.g., an additive decomposition of $f$ or a partitioning of $Y$. In
  \cref{sec:additive_partitioned}, we will discuss how the results of
  this section generalize to such methods, including
  additive/partitioned Runge--Kutta methods and splitting/composition
  methods.
\end{remark}

The main class of numerical integrators of this type that we will
consider are B-series methods, which \citet*{McMoMuVe2016} proved are
completely characterized by the following property of affine
equivariance.

\begin{definition}
  \label{def:ae}
  Given an affine map $ A \colon Y \rightarrow U $, a pair of vector
  fields $ f \in \mathfrak{X} (Y) $ and $ g \in \mathfrak{X} (U) $ is
  \emph{$A$-related} if $ A ^\prime \circ f = g \circ A $. A numerical
  integrator $\Phi$ is \emph{affine equivariant} if
  $ A \circ \Phi _f = \Phi _g \circ A $ for all $A$-related $f$ and
  $g$, all affine maps $A$, and all Banach spaces $Y$ and $U$.
\end{definition}

\begin{remark}
  \label{rmk:strong_vs_weak}
  This is consistent with the definition of affine equivariance in
  \citep{McMoMuVe2016}. We distinguish it from the weaker definition
  in \citet{MuVe2016}, where the condition above is required only for
  affine isomorphisms rather than all affine maps.
\end{remark}

\begin{definition}
  \label{def:fe}
  Given a G\^ateaux differentiable map $ F \colon Y \rightarrow Z $
  and $ f \in \mathfrak{X} (Y) $, define
  $ g \in \mathfrak{X} ( Y \times Z ) $ by
  $ g ( y, z ) = \bigl( f (y) , F ^\prime (y) f (y) \bigr) $. We say
  that a numerical integrator $\Phi$ is \emph{$F$-functionally
    equivariant} if
  $ (\mathrm{id}, F ) \circ \Phi _f = \Phi _g \circ ( \mathrm{id}, F )
  $ for all $ f \in \mathfrak{X} (Y) $. That is, if
  $ \Phi _f \colon y _0 \mapsto y _1 $, then
  $ \Phi _g \colon \bigl( y _0, F ( y _0 ) \bigr) \mapsto \bigl( y _1
  , F ( y _1 ) \bigr) $.  Given a class of maps $\mathcal{F}$, we say
  that $\Phi$ is \emph{$ \mathcal{F} $-functionally equivariant} if
  this holds for all $ F \in \mathcal{F} ( Y, Z ) $ and all Banach
  spaces $Y$ and $Z$.
\end{definition}

This is a slight generalization of the situation considered in the
introduction: $Z$ may now be any Banach space rather than
$ \mathbb{R} $, and $F$ is only required to be G\^ateaux
differentiable rather than $ C ^1 $. Note that
$g \in \mathfrak{X} ( Y \times Z ) $ is precisely the vector field
corresponding to the augmented system \eqref{eqn:augmented}.

\begin{example}[Runge--Kutta methods]
  \label{ex:rk}
An $s$-stage Runge--Kutta method has the form
\begin{align*}
  Y _i &= y _0 + \Delta t \sum _{ j = 1 } ^s a _{ i j } f ( Y _j ) , \qquad  i = 1, \ldots, s ,\\
  y _1 &= y _0 + \Delta t \sum _{ i = 1 } ^s b _i f ( Y _i ) ,
\end{align*}
where $ a _{ i j } $ and $ b _i $ are given coefficients defining the
method. When this method is applied to the augmented system
\eqref{eqn:augmented}, we augment the method by
\begin{equation*}
  z _1 = z _0 + \Delta t \sum _{ i = 1 } ^s b _i F ^\prime ( Y _i ) f ( Y _i ) .
\end{equation*}
Note that the internal stages $ Z _1, \ldots, Z _s $ are not needed,
since the augmented vector field depends only on $y$. Hence, for a
Runge--Kutta method, $F$-functional equivariance says that
\begin{equation*}
  F ( y _1 ) = F ( y _0 ) + \Delta t \sum _{ i = 1 } ^s b _i F ^\prime ( Y _i ) f ( Y _i ) .
\end{equation*}
In particular, if $F$ is an invariant of $f$, so that
$ F ^\prime (y) f (y) = 0 $ for all $y \in Y $, then the terms of this
sum vanish, and we get $ F ( y _1 ) = F ( y _0 ) $.
\end{example}

\begin{proposition}
  \label{prop:afe}
  Every affine equivariant method is affine functionally
  equivariant.
\end{proposition}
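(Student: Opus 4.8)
The plan is to recognize that the map $ (\mathrm{id}, F) \colon Y \to Y \times Z $ is itself an affine map whenever $F$ is, and then to check that $f$ and the augmented vector field $g$ are related by it in the sense of \cref{def:ae}; affine equivariance then gives the conclusion immediately. There is essentially nothing to do beyond making these two observations.

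First I would fix an affine map $ F \colon Y \to Z $ and set $ A \coloneqq (\mathrm{id}, F) \colon Y \to Y \times Z $, $ y \mapsto \bigl( y, F (y) \bigr) $. Since $F$ is affine, so is $A$, and its (constant) linear part is $ A ^\prime = ( \mathrm{id} _Y, L ) $, where $L$ denotes the constant linear part of $F$; note that $ F ^\prime (y) = L $ for every $ y \in Y $ precisely because $F$ is affine. The target $ U = Y \times Z $ is a Banach space, so $A$ is an admissible affine map for \cref{def:ae}.

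Next I would verify that $ f \in \mathfrak{X} (Y) $ and $ g \in \mathfrak{X} ( Y \times Z ) $, $ g ( y, z ) = \bigl( f (y), F ^\prime (y) f (y) \bigr) $, are $A$-related, i.e.\ $ A ^\prime \circ f = g \circ A $. On one hand, $ ( A ^\prime \circ f )(y) = \bigl( f (y), L\, f (y) \bigr) $; on the other, $ ( g \circ A )(y) = g \bigl( y, F (y) \bigr) = \bigl( f (y), F ^\prime (y) f (y) \bigr) $. These agree exactly because $ F ^\prime (y) = L $ for all $y$ — this is the single place where affineness of $F$ is used, and it is precisely what fails for general nonlinear $F$, which explains why the proposition is restricted to the affine class. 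With $A$-relatedness established, affine equivariance of $\Phi$ yields $ A \circ \Phi _f = \Phi _g \circ A $, which is the statement $ (\mathrm{id}, F) \circ \Phi _f = \Phi _g \circ (\mathrm{id}, F) $, i.e.\ $F$-functional equivariance.

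Since this argument goes through for every $ f \in \mathfrak{X} (Y) $ and every affine $F$ between arbitrary Banach spaces, $\Phi$ is affine functionally equivariant. I do not expect any real obstacle: the only insight is that $ (\mathrm{id}, F) $ is affine and that the augmented vector field is, by construction, $ (\mathrm{id}, F) $-related to $f$; the remaining manipulation of G\^ateaux derivatives is routine. (If one wishes to accommodate methods for which $ \Phi _f $ is only a partial function, the displayed identity should be read as holding wherever both sides are defined, which requires no change to the argument.)
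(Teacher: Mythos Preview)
Your proof is correct and follows exactly the same approach as the paper: observe that $(\mathrm{id}, F)$ is affine whenever $F$ is, verify that $f$ and the augmented vector field $g$ are $(\mathrm{id}, F)$-related, and invoke affine equivariance. The paper's version is simply more terse, omitting the explicit computation of $A^\prime$ that you spell out.
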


\begin{proof}
  If $ F \colon Y \rightarrow Z $ is an affine map, then so is
  $ (\mathrm{id}, F ) \colon Y \rightarrow Y \times Z $. Since the
  vector fields $f$ and $g$ in \cref{def:fe} are
  $ ( \mathrm{id}, F ) $-related, the conclusion follows by affine
  equivariance.
\end{proof}

\begin{remark}
  \label{rmk:aromatic}
  The converse is generally not true. For instance, the map
  $ y _0 \mapsto y _0 + ( f \operatorname{div} f ) ( y _0 ) $, which
  is defined for finite-dimensional $Y$, is seen to be affine
  functionally equivariant but is not affine equivariant except in the
  weaker sense mentioned in \cref{rmk:strong_vs_weak}. This is an
  example of an ``aromatic'' series that is not a B-series,
  cf.~\citet{MuVe2016}.
\end{remark}

Since we are concerned for the time being with affine equivariant
numerical integrators, it is natural to make the following assumptions
on $\mathcal{F}$. This includes cases where $\mathcal{F}$ contains not
only affine maps but also quadratic or higher-degree polynomial maps.

\begin{assumption}
  \label{assumption:affine}
  The class of maps $\mathcal{F}$ satisfies the following:
  \begin{itemize}
  \item $\mathcal{F} (Y, Y) $ contains the identity map for all $Y$;
  \item $ \mathcal{F} ( Y, Z ) $ is a vector space for all $Y$ and
    $Z$;
  \item $ \mathcal{F} $ is invariant under composition with affine
    maps, in the following sense: If $ A \colon Y \rightarrow U $ and
    $ B \colon V \rightarrow Z $ are affine and
    $ F \in \mathcal{F} ( U, V ) $, then
    $ B \circ F \circ A \in \mathcal{F} ( Y, Z ) $.
  \end{itemize} 
\end{assumption}

As noted in the introduction, preservation of invariants may be seen
as a special case of functional equivariance, so one might expect the
latter property to be strictly stronger than the former.  Perhaps
surprisingly, our first main result shows that they are equivalent.

\begin{theorem}
  \label{thm:fe}
  Let $\mathcal{F}$ satisfy \cref{assumption:affine}. A numerical
  integrator $\Phi$ preserves $\mathcal{F}$-invariants if and only if
  it is $\mathcal{F}$-functionally equivariant.
\end{theorem}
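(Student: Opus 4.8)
The plan is to prove the two implications separately, in each case passing to the augmented space $Y \times Z$ and exploiting the closure properties of $\mathcal{F}$ from \cref{assumption:affine} --- in particular the fact that every affine map, and hence every constant map, lies in $\mathcal{F}$ (write an affine $A$ as $A \circ \mathrm{id}_Y \circ \mathrm{id}_Y$ and use closure under affine composition) --- together with the affine equivariance of $\Phi$, which is the standing hypothesis in this part of the paper.

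For the ``only if'' direction, assume $\Phi$ preserves $\mathcal{F}$-invariants; fix $F \in \mathcal{F}(Y,Z)$ and $f \in \mathfrak{X}(Y)$, and form $g(y,z) = \bigl(f(y), F'(y)f(y)\bigr)$ on $Y \times Z$. The key point is that $G \colon Y \times Z \to Z$, $G(y,z) = z - F(y)$, is an $\mathcal{F}$-invariant of $g$: it equals $\pi_Z - F \circ \pi_Y$, so it lies in $\mathcal{F}(Y \times Z, Z)$ by \cref{assumption:affine}, and $G'(y,z)\,g(y,z) = F'(y)f(y) - F'(y)f(y) = 0$. Hence $\Phi_g$ preserves $G$, so writing $(y_1, z_1) = \Phi_g(y_0, z_0)$ we have $z_1 - F(y_1) = z_0 - F(y_0)$. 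Separately, $g$ and $f$ are $\pi_Y$-related (the $y$-component of $g$ is independent of $z$), so affine equivariance gives $y_1 = \pi_Y\bigl(\Phi_g(y_0, z_0)\bigr) = \Phi_f(y_0)$. Taking $z_0 = F(y_0)$ then forces $z_1 = F(y_1) = F\bigl(\Phi_f(y_0)\bigr)$, i.e.\ $\Phi_g\bigl(y_0, F(y_0)\bigr) = \bigl(\Phi_f(y_0), F(\Phi_f(y_0))\bigr)$, which is exactly $F$-functional equivariance.

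For the ``if'' direction, assume $\Phi$ is $\mathcal{F}$-functionally equivariant, and let $F \in \mathcal{F}(Y,Z)$ be an invariant of $f$, so that the augmented vector field reduces to $g(y,z) = (f(y), 0)$. Fix $y_0$ and let $c \colon Y \to Z$ be the constant map with value $F(y_0)$; then $c \in \mathcal{F}(Y,Z)$, and since $c' = 0$ the vector field associated to $c$ is \emph{the same} $g$. Applying $F$-functional equivariance at $y_0$ gives $\Phi_g\bigl(y_0, F(y_0)\bigr) = \bigl(\Phi_f(y_0), F(\Phi_f(y_0))\bigr)$, while applying $c$-functional equivariance at $y_0$ gives $\Phi_g\bigl(y_0, F(y_0)\bigr) = \Phi_g\bigl(y_0, c(y_0)\bigr) = \bigl(\Phi_f(y_0), c(\Phi_f(y_0))\bigr) = \bigl(\Phi_f(y_0), F(y_0)\bigr)$. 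Comparing $Z$-components yields $F\bigl(\Phi_f(y_0)\bigr) = F(y_0)$, and since $y_0$ is arbitrary, $\Phi$ preserves $F$.

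The bookkeeping that $G$ and the constant maps belong to $\mathcal{F}$ is routine and is exactly what \cref{assumption:affine} is designed to provide. The one genuine obstacle is the $y$-component in the ``only if'' direction: preservation of invariants by itself constrains only the $z$-component of $\Phi_g$ along the graph of $F$ and says nothing about the $y$-component, so one really does need affine equivariance of $\Phi$ (equivalently, equivariance with respect to $\pi_Y$) to conclude that $\Phi_g$ projects onto $\Phi_f$; dropping it makes the equivalence fail.
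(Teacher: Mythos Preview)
Your proof is correct and follows essentially the same approach as the paper: for $(\Rightarrow)$ you use the $\mathcal{F}$-invariant $G(y,z)=z-F(y)$ of the augmented field $g$, and for $(\Leftarrow)$ you compare the $F$-augmented system with the constant-$c$-augmented system, both of which have the same $g=(f,0)$. The one place you go beyond the paper is in explicitly invoking affine equivariance with respect to the projection $\pi_Y$ to identify the $y$-component of $\Phi_g(y_0,z_0)$ with $\Phi_f(y_0)$; the paper's proof simply writes $\Phi_g\colon(y_0,z_0)\mapsto(y_1,z_1)$ and tacitly takes $y_1=\Phi_f(y_0)$, so your remark in the final paragraph that this step genuinely needs the standing affine-equivariance hypothesis is a fair sharpening rather than a departure.
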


\begin{proof}
  $( \Rightarrow ) $ Suppose $\Phi$ preserves
  $\mathcal{F}$-invariants. Given $ F \in \mathcal{F} ( Y, Z ) $, it
  follows from \cref{assumption:affine} that
  $ G ( y, z ) = F (y) - z $ is in $ \mathcal{F} ( Y \times Z , Z )
  $. This is an invariant of the augmented vector field
  $g(y, z ) = \bigl( f (y) , F ^\prime (y) f (y) \bigr) $, since
  $ G ^\prime ( y, z ) g ( y, z ) = F ^\prime (y) f (y) - F ^\prime
  (y) f (y) = 0 $. Hence, preservation of $\mathcal{F}$-invariants
  implies $ \Phi _g \colon ( y _0, z _0 ) \mapsto ( y _1 , z _1 ) $
  satisfies $ G ( y _1, z _1 ) = G ( y _0 , z _0 ) $, i.e.,
  $ F ( y _1 ) - z _1 = F ( y _0 ) - z _0 $. In particular,
  $ z _0 = F ( y _0 ) $ implies $ z _1 = F ( y _1 ) $.

  $ ( \Leftarrow ) $ Conversely, suppose $\Phi$ is
  $\mathcal{F}$-functionally equivariant. If
  $F \in \mathcal{F} ( Y, Z ) $ is an invariant of
  $f \in \mathfrak{X} ( Y ) $, then the augmented vector field is
  $ g = ( f, 0 ) $, and $\mathcal{F}$-functional equivariance implies
  $ \Phi _g \colon \bigl( y _0, F ( y _0 ) \bigr) \mapsto \bigl( y _1
  , F ( y _1 ) \bigr) $. However, any constant functional
  $ y \mapsto c \in Z $ is also in $ \mathcal{F} ( Y , Z ) $ and has
  the same augmented vector field $g$, so
  $ \Phi _g \colon ( y _0, c ) \mapsto ( y _1 , c ) $ for all
  $ c \in Z $. Hence, $ F ( y _1 ) = F ( y _0 ) $.
\end{proof}

\begin{corollary}
  \label{cor:b-series}
  For B-series methods, the following statements hold:
  \begin{enumerate}[label=(\alph*)]
  \item Every B-series method is affine functionally equivariant.
  \item B-series methods preserving quadratic invariants (e.g.,
    Gauss--Legendre collocation methods) are quadratic
    functionally equivariant.\label{cor:b-series_quadratic}
  \item No B-series method is cubic functionally equivariant.
  \end{enumerate} 
\end{corollary}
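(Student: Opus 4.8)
The plan is to derive all three parts from \cref{thm:fe}, applied to the class $\mathcal{P}_k$ of (continuous) polynomial maps of degree at most $k$, taking $k=1,2,3$ in turn, together with the properties of B-series methods already recalled. The first step is to verify that each $\mathcal{P}_k$ satisfies \cref{assumption:affine}: the identity map is affine and so lies in $\mathcal{P}_k(Y,Y)$; each $\mathcal{P}_k(Y,Z)$ is plainly a vector space; and if $A\colon Y\to U$ and $B\colon V\to Z$ are affine with $F\in\mathcal{P}_k(U,V)$, then substituting the affine expression $A$ into the degree-$\le k$ map $F$ does not raise the degree, and post-composing with the affine $B$ leaves the degree unchanged, so $B\circ F\circ A\in\mathcal{P}_k(Y,Z)$. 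With this in hand, \cref{thm:fe} says that, for a B-series method, $\mathcal{P}_k$-functional equivariance is equivalent to preservation of all degree-$\le k$ invariants.

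For part (a), I would simply invoke \cref{prop:afe}: since B-series methods are exactly the affine equivariant methods \citep{McMoMuVe2016}, they are affine functionally equivariant. (Equivalently, a B-series method preserves every affine invariant $F\colon Y\to Z$ — a constant derivative $F'\equiv L$ forces $L\,F(\tau)\equiv 0$ for each elementary differential $F(\tau)$, by induction on the tree, since $L$ commutes with differentiation — and then \cref{thm:fe} applies with $\mathcal{P}_1$.) For part (b), the hypothesis is that the B-series method (for instance, Gauss--Legendre collocation) preserves quadratic invariants. In the usual sense this means every scalar quadratic invariant, and I would first upgrade this to $\mathcal{P}_2$-invariants valued in an arbitrary Banach space $Z$: if $F\in\mathcal{P}_2(Y,Z)$ satisfies $F'(y)f(y)=0$, then for each $\ell\in Z^*$ the scalar quadratic $\ell\circ F$ is an invariant of $f$, hence is preserved, so $\ell\bigl(F(y_1)\bigr)=\ell\bigl(F(y_0)\bigr)$ for all $\ell\in Z^*$ and thus $F(y_1)=F(y_0)$ by Hahn--Banach. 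Now \cref{thm:fe} delivers quadratic functional equivariance.

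For part (c), by \cref{thm:fe} it is enough to show that a B-series method cannot conserve every cubic invariant. I would take $Y=\mathbb{R}^3$ and the quadratic vector field $f(y)=\bigl(y_1(y_2-y_3),\,y_2(y_3-y_1),\,y_3(y_1-y_2)\bigr)$, which has the cubic invariant $P(y)=y_1y_2y_3$, and expand the polynomial $P(y_1)-P(y_0)$ using the B-series $\Phi_f(y_0)=y_0+\Delta t\,f(y_0)+\Delta t^2\,a([\bullet])\,f'(y_0)f(y_0)+O(\Delta t^3)$. Using $P'(y_0)f(y_0)=0$ together with the pointwise identity $P'f'f=-P''(f,f)$ (differentiate $P'f\equiv 0$ in the direction $f$), the leading term comes out as $\bigl(\tfrac12-a([\bullet])\bigr)\Delta t^2\,P''\bigl(f(y_0),f(y_0)\bigr)$, and $P''(f,f)$ is not identically zero. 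So a cubic functionally equivariant (consistent) B-series method must have $a([\bullet])=\tfrac12$; continuing the expansion to higher orders forces successively more of the coefficients $a(\tau)$, and in fact one can show $\Phi$ would have to be the exact flow. Ruling out the exact flow — and, if consistency is not assumed, the trivial identity method — as not being genuine one-step discretizations, no B-series method is cubic functionally equivariant. Alternatively, I would cite the known result that no nontrivial B-series method conserves the energy of all Hamiltonian systems (see \citet{HaLuWa2006} and references therein) and specialize it to a cubic Hamiltonian.

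The hard part is part (c). The $O(\Delta t^2)$ computation only excludes methods of order below two, and disposing of the order-$\ge 2$ methods requires either carrying the order-by-order argument to completion — which amounts to re-proving that the exact flow is the unique consistent B-series conserving all cubic invariants — or invoking an external non-preservation theorem. One must also state explicitly that the claim concerns nontrivial methods, since both the identity map and the exact flow are trivially cubic (indeed $C^1$-) functionally equivariant. Parts (a) and (b), by contrast, are routine once \cref{assumption:affine} has been checked for $\mathcal{P}_1$ and $\mathcal{P}_2$.
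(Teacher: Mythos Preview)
Your approach for (a) and (b) is exactly the paper's --- reduce everything to \cref{thm:fe} after checking \cref{assumption:affine} for the relevant polynomial class --- and your added details (the explicit verification of the assumption, the Hahn--Banach upgrade from scalar to $Z$-valued quadratic invariants) go beyond what the paper writes out.

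For (c) the paper's proof is a single citation: by \cref{thm:fe}, cubic functional equivariance is equivalent to preservation of all cubic invariants, and no B-series method has this property, by \citet{ChMu2007} and \citet{IsQuTs2007}. Your order-by-order computation on the invariant $y_1y_2y_3$ is a good illustration of where the obstruction comes from, but as you correctly diagnose it only fixes $a([\bullet])$; pushing it through all orders is exactly what the cited papers do, so there is no shortcut there. Your alternative --- specialise the ``no B-series method is energy-preserving'' theorem to a cubic Hamiltonian --- is morally the same idea, but you would still need to verify that the standard proof already breaks down at a cubic $H$, which again is essentially the content of \citet{ChMu2007}. Your remark about excluding the exact flow and the identity is well taken; the paper leaves that quantifier to the cited references.
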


\begin{proof}
  This follows since B-series methods are affine equivariant and none
  preserves arbitrary cubic invariants (\citet*{ChMu2007,IsQuTs2007}).
\end{proof}

\subsection{Strong equivariance vs.~functional equivariance}

There is a stronger notion of $F$-equivariance, based on a
straightforward generalization of \cref{def:ae} to nonlinear maps
$ F \colon Y \rightarrow U $. Two vector fields
$ f \in \mathfrak{X} (Y) $ and $ g \in \mathfrak{X} (U) $ are
$F$-related if $ F ^\prime (y) f (y) = ( g \circ F ) (y) $ for all
$ y \in Y $, and $\Phi$ is $F$-equivariant if
$ F \circ \Phi _f = \Phi _g \circ F $ whenever this is the case.

To illustrate the distinction with functional equivariance, we now
show that the implicit midpoint method is \emph{not} quadratic
equivariant in this stronger sense, although
\cref{cor:b-series}\ref{cor:b-series_quadratic} tells us that it is
quadratic functionally equivariant. Let
$ F \colon \mathbb{R} \rightarrow \mathbb{R} , y \mapsto y ^2 $, and
observe that the vector fields
\begin{equation*}
  f (y) = -y , \qquad g (u) = -2 u ,
\end{equation*}
are $F$-related. Applying the implicit midpoint method with time step
size $\Delta t = 1 $ gives
\begin{equation*}
  y _1 = \frac{ 1 }{ 3 } y _0 , \qquad u _1 = 0 .
\end{equation*}
Since $ u _1 \neq ( y _1 ) ^2 $ for $ y _0 \neq 0 $, the method is not
$F$-equivariant. On the other hand, applying the method to the
augmented equation $ \dot{z} = F ^\prime (y) f (y) = - 2 y ^2 $ with $ z _0 = ( y _0 ) ^2 $ gives
\begin{equation*}
  z _1 = ( y _0 ) ^2  - 2 \biggl( \frac{ y _0 + y _1 }{ 2 } \biggr) ^2  = ( y _0 ) ^2 - 2 \biggl( \frac{ y _0 + \frac{ 1 }{ 3 } y _0 }{ 2 }  \biggr) ^2 = \frac{ 1 }{ 9 } (y _0) ^2 = ( y _1 ) ^2 ,
\end{equation*}
which illustrates that the method is $F$-functionally equivariant.

Essentially, functional equivariance requires only that $\Phi$ commute
with \emph{particular} pairs of related vector fields, while strong
equivariance requires that it commute with \emph{all} such pairs.

\subsection{Affine equivariance and closure under differentiation}
\label{sec:differentiation}

In addition to invariants and observables that depend on $y$ itself,
we are often interested in those that depend on variations of $y$. We
say that $\eta$ is a variation of $y$ if
$ ( y, \eta ) \in Y \times Y $ satisfy
\begin{equation}
  \label{eqn:variational}
  \dot{y} = f (y) , \qquad \dot{ \eta } = f ^\prime (y) \eta ,
\end{equation}
whose flow is the derivative of the flow of
$ f \in \mathfrak{X} (Y) $. An especially important example is the
canonical symplectic form for Hamiltonian ODEs, which can be
understood as a quadratic invariant depending on two variations of
$y$.

\begin{definition}
  A numerical method $\Phi$ is said to be \emph{closed under
    differentiation} if the method applied to \eqref{eqn:variational}
  is the derivative of $ \Phi _f $, i.e.,
  $ ( y _1 , \eta _1 ) = \bigl( \Phi _f ( y _0 ), \Phi ^\prime _f ( y
  _0 ) \eta _0 \bigr) $.
\end{definition}

\citet{BoSc1994} showed that Runge--Kutta methods are closed under
differentiation, from which it follows that those preserving quadratic
invariants are symplectic integrators. The same argument can be
applied to B-series methods, where closure under differentiation can
be established by showing that it holds for all trees \citep[Theorem
VI.7.1]{HaLuWa2006}. Here, we present a new, tree-free proof that uses
only affine equivariance, and which will readily generalize to
additive and partitioned methods in \cref{sec:additive_partitioned}.

\begin{theorem}
  \label{thm:differentiation}
  Affine equivariant numerical integrators are closed under
  differentiation.
\end{theorem}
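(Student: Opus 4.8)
The plan is to exhibit the variational vector field of \eqref{eqn:variational} as a limit of affinely rescaled copies of $f$ itself, and then transport the resulting statement about $\Phi _f$ across affine equivariance. Identify $TY$ with $Y \times Y$ and write $g ( y, \eta ) = \bigl( f (y) , f ^\prime (y) \eta \bigr)$ for the vector field of \eqref{eqn:variational}; the goal is to show $\Phi _g \colon ( y _0, \eta _0 ) \mapsto \bigl( \Phi _f ( y _0 ) , \Phi _f ^\prime ( y _0 ) \eta _0 \bigr)$. As a preliminary, I would note that the two coordinate projections $Y \times Y \to Y$ each make the product field $f \times f$ affine-related to $f$, so affine equivariance forces $\Phi _{ f \times f } ( a _0, b _0 ) = \bigl( \Phi _f ( a _0 ) , \Phi _f ( b _0 ) \bigr)$; similarly the projection $( y, \eta ) \mapsto y$ makes $g$ affine-related to $f$, so the first component of $\Phi _g ( y _0, \eta _0 )$ is already $\Phi _f ( y _0 )$, and it remains only to identify the second component, call it $\eta _1$.

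For the main construction, I would fix $\epsilon \neq 0$, let $T _\epsilon \colon Y \times Y \to Y \times Y$ be the linear isomorphism $( u, v ) \mapsto \bigl( u, u + \tfrac1\epsilon ( v - u ) \bigr)$, and let $h _\epsilon \in \mathfrak{X} ( Y \times Y )$ be $h _\epsilon ( u, v ) = \bigl( f (u) , f (u) + \tfrac1\epsilon \bigl( f \bigl( u + \epsilon ( v - u ) \bigr) - f (u) \bigr) \bigr)$. A short chain-rule computation shows that $f \times f$ is $T _\epsilon$-related to $h _\epsilon$, so affine equivariance together with the preliminary product rule yields
\begin{equation*}
  \Phi _{ h _\epsilon } ( y _0, y _0 + \eta _0 ) = \Bigl( \Phi _f ( y _0 ) ,\ \Phi _f ( y _0 ) + \tfrac1\epsilon \bigl( \Phi _f ( y _0 + \epsilon \eta _0 ) - \Phi _f ( y _0 ) \bigr) \Bigr) .
\end{equation*}
On the other hand, the linear isomorphism $( y, \eta ) \mapsto ( y, y + \eta )$ makes $g$ affine-related to the field $\widetilde g ( u, v ) \coloneqq \bigl( f (u) , f (u) + f ^\prime (u) ( v - u ) \bigr)$, which is precisely $\lim _{ \epsilon \to 0 } h _\epsilon$; hence affine equivariance gives $\Phi _{ \widetilde g } ( y _0, y _0 + \eta _0 ) = \bigl( \Phi _f ( y _0 ) , \Phi _f ( y _0 ) + \eta _1 \bigr)$, with the same $\eta _1$ as before.

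To finish, one lets $\epsilon \to 0$ in the displayed identity: the right-hand side converges to $\bigl( \Phi _f ( y _0 ) , \Phi _f ( y _0 ) + \Phi _f ^\prime ( y _0 ) \eta _0 \bigr)$ because $\Phi _f$ is differentiable (as is implicit in the statement), while the left-hand side converges to $\Phi _{ \widetilde g } ( y _0, y _0 + \eta _0 )$, so comparing second components gives $\eta _1 = \Phi _f ^\prime ( y _0 ) \eta _0$, as desired. The step I expect to be the main obstacle is this passage to the limit, i.e.\ the assertion that $\Phi _{ h _\epsilon } \to \Phi _{ \widetilde g }$ as $h _\epsilon \to \widetilde g$: this is a form of continuous dependence of the integrator on its vector field, which is not part of the bare definition but is available for the methods of interest. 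For a B-series method each term of the series is a fixed elementary differential depending polynomially on the vector field and its derivatives, and for sufficiently smooth $f$ one has $h _\epsilon \to \widetilde g$ together with all derivatives uniformly on bounded sets, so the limit may be taken term by term; equivalently, the argument may be run at the level of formal B-series in the step size, where ``$\epsilon \to 0$'' is simply the algebraic operation of extracting the $\epsilon$-independent part. A minor additional point is to verify the $T _\epsilon$-relatedness claim and that the maps involved are defined for all sufficiently small $\epsilon$.
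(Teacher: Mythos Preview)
Your proof is essentially the same as the paper's: both realize the variational system as the $\epsilon \to 0$ limit of an affine rescaling of $f \times f$, use affine equivariance to compute $\Phi$ on the rescaled system exactly, and then pass to the limit; the paper packages the rescaling as augmenting by the affine observable $z = (x-y)/\epsilon$ on $Y \times Y \times Y$ via \cref{prop:afe}, whereas you use the affine isomorphism $T_\epsilon$ on $Y \times Y$, but this is cosmetic. The continuity-in-vector-field issue you flag is genuine and is left equally implicit in the paper's terse ``take the limit as $\epsilon \rightarrow 0$.''
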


\begin{proof}
  Given $ f \in \mathfrak{X}  (Y) $, consider the system
  \begin{equation*}
    \dot{x} = f (x) , \qquad \dot{y} = f (y) ,
  \end{equation*}
  corresponding to $ f \times f \in \mathfrak{X} ( Y \times Y )
  $. Since $ f \times f $ is $A$-related to $f$, where $A$ is either
  of the projections $ ( x, y ) \mapsto x $ or $ ( x, y ) \mapsto y $,
  it follows that $ \Phi _{ f \times f } = \Phi _f \times \Phi _f
  $. Now, let $ \epsilon > 0 $ and take
  $ z = F(x,y) = ( x - y ) / \epsilon $, giving the augmented system
  \begin{equation*}
    \dot{x} = f (x) , \qquad \dot{y} = f (y) , \qquad \dot{z} = \frac{ f (x) - f (y) }{ \epsilon } .
  \end{equation*}
  By \cref{prop:afe}, applying $\Phi$ to this system yields
  \begin{equation*}
    x _1 = \Phi _f ( x _0 ) , \qquad y _1 = \Phi _f ( y _0 ) , \qquad z _1 = \frac{ \Phi _f ( x _0 ) - \Phi _f ( y _0 ) }{ \epsilon } .
  \end{equation*}
  Finally, let $ x _0 = y _0 + \epsilon \eta _0 $ and take the limit
  as $ \epsilon \rightarrow 0 $.
\end{proof}

\begin{corollary}
  \label{cor:fe_variation}
  Let $\Phi$ be an affine equivariant numerical integrator preserving
  $\mathcal{F}$-invariants. Given
  $ F \in \mathcal{F} ( Y \times Y , Z ) $, define
  $ g \in \mathfrak{X} ( Y \times Y \times Z ) $ by
  \begin{equation*}
    g ( y, \eta, z ) = \Bigl( f (y) , f ^\prime (y) \eta, F ^\prime (
    y, \eta ) \bigl( f (y) , f ^\prime (y) \eta \bigr) \Bigr).
  \end{equation*}
  Then
  $ \Phi _g \bigl( y _0 , \eta _0, F ( y _0 , \eta _0 ) \bigr) =
  \bigl( y _1 , \eta _1 , F ( y _1, \eta _1 ) \bigr) $, where
  $ y _1 = \Phi _f ( y _0 ) $ and
  $ \eta _1 = \Phi _f ^\prime ( y _0 ) \eta _0 $.
\end{corollary}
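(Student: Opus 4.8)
The plan is to combine the section's two main results: \cref{thm:differentiation} (closure under differentiation) to handle the base pair $(y, \eta)$, and \cref{thm:fe} to handle the augmented coordinate $z$. I would regard $W \coloneqq Y \times Y$ as the base Banach space, with variational vector field $h \in \mathfrak{X}(W)$ given by $h(y, \eta) = \bigl(f(y), f'(y)\eta\bigr)$, so that \eqref{eqn:variational} is just $\dot{w} = h(w)$ for $w = (y, \eta)$. The first observation is that, written in terms of $w$, the vector field $g$ in the statement is exactly the augmented vector field attached to $h$ and $F$ in the sense of \cref{def:fe}: indeed $F'(y, \eta)\bigl(f(y), f'(y)\eta\bigr) = F'(w)\,h(w)$ is the G\^ateaux derivative of $F$ along $h$.

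Next I would invoke the hypotheses. Since $\Phi$ preserves $\mathcal{F}$-invariants (and $\mathcal{F}$ satisfies \cref{assumption:affine}), \cref{thm:fe} shows that $\Phi$ is $\mathcal{F}$-functionally equivariant. Applying this with base space $W$, vector field $h$, and observable $F \in \mathcal{F}(W, Z)$ --- which is precisely the standing hypothesis $F \in \mathcal{F}(Y \times Y, Z)$ --- yields
\[
  \Phi_g\bigl(w_0, F(w_0)\bigr) = \bigl(w_1, F(w_1)\bigr), \qquad w_1 = \Phi_h(w_0).
\]
Separately, since $\Phi$ is affine equivariant, \cref{thm:differentiation} applies to $h$, which is the variational system \eqref{eqn:variational} for $f$, and gives $\Phi_h(y_0, \eta_0) = \bigl(\Phi_f(y_0), \Phi_f'(y_0)\eta_0\bigr) = (y_1, \eta_1)$. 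Substituting this for $w_1$ in the display produces $\Phi_g\bigl(y_0, \eta_0, F(y_0, \eta_0)\bigr) = \bigl(y_1, \eta_1, F(y_1, \eta_1)\bigr)$, which is the claim.

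I do not expect a genuine obstacle here; the content is in correctly aligning the hypotheses of the two cited theorems, and the argument is otherwise bookkeeping. The one point deserving a sentence of care is that ``applying $\Phi$ to the augmented system with vector field $g$'' agrees with ``applying $\Phi$ to $h$ and then augmenting by $F$'' --- i.e., that no spurious internal-stage coupling is introduced by the $z$-component. This is already guaranteed by the setup of \cref{def:fe} (and illustrated in \cref{ex:rk}), since the $z$-component of $g$ feeds back only through $w$, not through $z$ itself.
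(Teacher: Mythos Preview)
Your proposal is correct and follows exactly the same route as the paper's own proof, which simply reads ``Apply \cref{thm:fe} and \cref{thm:differentiation}.'' Your expansion of the bookkeeping---viewing $W = Y \times Y$ as the base space for $h$ and recognizing $g$ as the augmented vector field of \cref{def:fe}---is precisely what the one-line proof is abbreviating.
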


\begin{proof}
  Apply \cref{thm:fe} and \cref{thm:differentiation}.
\end{proof}

\begin{remark}
  \label{rmk:multiple_variations}
  It is trivial to extend \cref{cor:fe_variation} to the case where
  $F$ depends on two or more variations of $y$, e.g.,
  $ F = F ( y, \xi, \eta ) $ where $ \dot{ \xi } = f ^\prime (y) \xi $
  and $ \dot{ \eta } = f ^\prime (y) \eta $.
\end{remark}

\subsection{Examples}

Before discussing applications to conservation laws for PDEs, which
will be the subject of \cref{sec:conservation}, we first illustrate
some examples of functional equivariance for numerical ODEs.

\subsubsection{Hamiltonian systems}

Suppose $Y$ is equipped with a Poisson bracket
$ \{ \cdot , \cdot \} $. Given $ H \colon Y \rightarrow \mathbb{R} $,
the corresponding Hamiltonian vector field $ f \in \mathfrak{X} (Y) $
is determined by the condition $ \dot{ F } = \{ F , H \} $ for
$ F \colon Y \rightarrow \mathbb{R} $. That is, the augmented system
\eqref{eqn:augmented} can be written
\begin{equation*}
  \dot{y} = f (y) , \qquad \dot{z} = \{ F, H \} (y) .
\end{equation*}
Hence, if $\Phi$ is $\mathcal{F}$-functionally equivariant, then
applying $\Phi$ to this system gives a ``discrete version'' of
$ \dot{ F } = \{ F, H \} $ for
$ F \in \mathcal{F} ( Y, \mathbb{R} ) $. For a Runge--Kutta method,
this has the form
\begin{equation*}
  F ( y _1 ) = F ( y _0 ) + \Delta t \sum _{ i = 1 } ^s b _i \{ F, H \} ( Y _i ) .
\end{equation*}
This holds for \emph{any} Poisson bracket, not just the canonical
bracket or those for which the Poisson tensor is
constant. Preservation of $\mathcal{F}$-invariants is the special case
where $ \{ F, H \} = 0 $.

\subsubsection{Canonical Hamiltonian systems with damping/forcing}
\label{sec:damping}

Let $ Y = \mathbb{R} ^{ 2 n } $ with canonical coordinates
$ y = ( q, p ) $. Consider the
system
\begin{equation}
  \label{eqn:hamiltonian_linear_damping}
  \dot{q} = \nabla _p H (q,p) , \qquad \dot{p} = - \nabla _q H (q,p) - c p ,
\end{equation}
where $H$ is a Hamiltonian and $ c \geq 0 $ a constant parameter. If
$H$ has the special form
$ H ( q, p ) = \frac{1}{2} p ^T M ^{-1} p + V (q) $, where $M$ is a
positive definite mass matrix and $V$ is a potential energy function,
then energy is dissipated according to
$ \frac{\mathrm{d}}{\mathrm{d}t} H ( q, p ) = - c p ^T M ^{-1} p $, and
the parameter $c$ dictates the rate of dissipation.

If $V$ is also quadratic, then so is $H$, and hence any quadratic
functionally equivariant method $\Phi$ yields a discrete version of
this dissipation law. If $\Phi$ is a Runge--Kutta method preserving
quadratic invariants, then this has the form
\begin{equation*}
  H ( q _1 , p _1 ) = H ( q _0 , p _0 ) - \Delta t \sum _{ i = 1 } ^s b _i c P _i ^T M ^{-1} P _i .
\end{equation*}
There is no reason to restrict to linear damping: if we replace the
damping term $ - c p $ in \eqref{eqn:hamiltonian_linear_damping} by an
arbitrary forcing term $ \phi ( q, p ) $, then we obtain
\begin{equation}
  \label{eqn:forcing}
  H(q _1, p _1 ) = H(q _0, p _0) + \Delta t \sum _{ i = 1 } ^s b _i P _i ^T M ^{-1} \phi ( Q _i  , P _i ) ,
\end{equation}
where the sum on the right-hand side approximates the work done by
$\phi$.  When $V$ is not quadratic, the identities above generally do
not hold. However, since the kinetic energy functional
$ \frac{1}{2} p ^T M p $ is quadratic, we still get the weaker
identity
\begin{equation*}
  \frac{1}{2} p _1 ^T M ^{-1} p _1 = \frac{1}{2} p _0 ^T M ^{-1} p _0 + \Delta t \sum _{ i = 1 } ^s b _i P _i ^T M ^{-1} \bigl[ - \nabla V ( Q _i ) + \phi ( Q _i , P _i ) \bigr] ,
\end{equation*} 
where the sum approximates work done by both conservative and
non-conservative forces.

\subsubsection{Monotone observables} Suppose
$ F \in \mathcal{F} (Y, \mathbb{R} ) $ is such that
$ F ^\prime (y) f (y) \leq 0 $, so $ F (y) $ is monotone
decreasing. If $\Phi$ is an $\mathcal{F}$-functionally equivariant
Runge--Kutta method with $ b _i \geq 0 $, then
\begin{equation*}
  F ( y _1 ) = F ( y _0 ) + \Delta t \sum _{ i = 1 } ^s b _i F ^\prime ( Y _i ) f ( Y _i ) \leq F ( y _0 ) ,
\end{equation*}
so $F$ is also monotone decreasing along the numerical
solution. Conversely, any method with this monotonicity property also
preserves $\mathcal{F}$-invariants, and is thus
$\mathcal{F}$-functionally equivariant, since $F$ is an invariant when
$ \pm F $ are both monotone decreasing.

\begin{remark}
  For Runge--Kutta methods, the additional condition $ b _i \geq 0 $
  is needed to get monotonicity. Functional equivariance alone is not
  sufficient. We are not aware of a more general version of this
  condition for arbitrary B-series methods.
\end{remark}

An immediate consequence is the known B-stability of Runge--Kutta
methods preserving quadratic invariants with $ b _i \geq 0 $. If $Y$
is a Hilbert space with inner product
$ \langle \cdot , \cdot \rangle $, consider
\begin{equation*}
  \dot{x} = f (x) , \qquad \dot{y} = f (y) ,
\end{equation*}
on $ Y \times Y $, and let
$ F ( x, y ) = \frac{1}{2} \lVert x - y \rVert ^2 $. Then
$ F ^\prime ( x, y ) \bigl( f (x) , f (y) \bigr) = \bigl\langle x - y
, f (x) - f (y) \bigr\rangle \leq 0 $ implies
$ F (x _1 , y _1 ) \leq F ( x _0 , y _0 ) $, i.e.,
$ \lVert x _1 - y _1 \rVert \leq \lVert x _0 - y _0 \rVert $. This is
precisely the condition for B-stability,
cf.~\citet{Butcher1975,BuBu1979}.

Another immediate application is to the dissipative systems in
\cref{sec:damping}, when $H$ is quadratic. If $ \phi $ is a
dissipative force, in the sense that
$ p ^T M ^{-1} \phi ( q, p ) \leq 0 $ for all
$ ( q, p ) \in \mathbb{R} ^{ 2 n } $, then \eqref{eqn:forcing} implies
$ H ( q _1 , p _1 ) \leq H ( q _0 , p _0 ) $, i.e., the quadratic
energy is monotone decreasing along the numerical solution.

\subsubsection{Symplectic and conformal symplectic systems}
\label{sec:symplectic}

Suppose that $\omega$ is a continuous bilinear form on $Y$. Let $\xi$
and $\eta$ each be variations of $y$, so that
$ ( y, \xi, \eta ) \in Y \times Y \times Y $ satisfy
\begin{equation*}
  \dot{y} = f (y) , \qquad \dot{ \xi } = f ^\prime (y) \xi , \qquad \dot{ \eta } = f ^\prime (y) \eta .
\end{equation*}
Then $ \omega ( \xi, \eta ) $ is a quadratic functional of this
augmented system, evolving according to
\begin{equation*}
  \frac{\mathrm{d}}{\mathrm{d}t} \omega ( \xi, \eta ) = \omega \bigl( f ^\prime (y) \xi, \eta \bigr) + \omega \bigl( \xi, f ^\prime (y) \eta \bigr) = (L _f \omega ) _y ( \xi, \eta ) ,
\end{equation*}
where $ (L _f \omega) _y $ is the Lie derivative along $f$ of $\omega$
at $y$ \citep[Theorem 6.4.1]{AbMaRa1988}.

If $\Phi$ preserves quadratic invariants, then we may apply quadratic
functional equivariance to describe the numerical evolution of
$\omega$. Taking
$ g \in \mathfrak{X} ( Y \times Y \times Y \times \mathbb{R} ) $ to be
\begin{equation*}
  g ( y, \xi, \eta, z ) = \bigl( f (y) , f ^\prime (y) \xi, f ^\prime (y) \eta, ( L _f \omega ) _y ( \xi, \eta ) \bigr) ,
\end{equation*}
it follows from \cref{cor:fe_variation,rmk:multiple_variations} that
\begin{equation}
  \label{eqn:symplectic}
  \Phi _g ( y _0 , \xi _0 , \eta _0, \omega ( \xi _0, \eta _0 ) \bigr) = \bigl( y _1, \xi _1 , \eta _1 , \omega ( \xi _1, \eta _1 ) \bigr) ,
\end{equation}
where $ y _1 = \Phi _f ( y _0 ) $,
$ \xi _1 = \Phi _f ^\prime ( y _0 ) \xi _0 $, and
$ \eta _1 = \Phi _f ^\prime ( y _0 ) \eta _0 $. Furthermore, this implies
\begin{equation*}
  \omega ( \xi _1 , \eta _1 ) = \omega \bigl( \Phi _f ^\prime ( y _0 ) \xi _0, \Phi _f ^\prime ( y _0 ) \eta _0 \bigr) = ( \Phi _f ^\ast \omega ) _{y _0} ( \xi _0, \eta _0 ) ,
\end{equation*}
where $ ( \Phi _f ^\ast \omega ) _{ y _0 } $ is the pullback of
$\omega$ by $ \Phi _f $ at $ y _0 $.  For a Runge--Kutta method
preserving quadratic invariants, \eqref{eqn:symplectic} takes the form
\begin{equation}
  \label{eqn:rk_symplectic}
  \omega  (\xi _1 , \eta _1 ) = \omega ( \xi _0 , \eta _0 ) + \Delta t \sum _{ i = 1 } ^s b _i ( L _f \omega ) _{ Y _i } ( \Xi _i , \mathrm{H} _i ) ,
\end{equation}
and the sum on the right-hand side expresses the difference between
$ \Phi _{\Delta t f} ^\ast \omega $ and $ \omega $ at $ y _0 $.

In particular, suppose that $\omega$ is antisymmetric and
nondegenerate, so that $ ( Y , \omega ) $ is a symplectic vector
space. If $f$ is a symplectic vector field, satisfying
$ L _f \omega = 0 $, then we recover the result of \citet{BoSc1994}
that if $\Phi$ preserves quadratic invariants, then
$ \Phi _f ^\ast \omega = \omega $, i.e., $\Phi$ is a symplectic
integrator. An interesting generalization is the case of
\emph{conformal symplectic} vector fields, satisfying
$ L _f \omega = - c \omega $ for some constant $c$, of which
\eqref{eqn:hamiltonian_linear_damping} is a canonical example; see
\citet{McPe2001}. In this case, \eqref{eqn:rk_symplectic} becomes
\begin{equation*}
  \omega  (\xi _1 , \eta _1 ) = \omega ( \xi _0 , \eta _0 ) - \Delta t \sum _{ i = 1 } ^s b _i c \omega ( \Xi _i , \mathrm{H} _i ) ,
\end{equation*}
which can be seen as an approximate conformal symplecticity
relation. However, $ \Phi _{ \Delta t f } ^\ast \omega $ generally
does not equal $ e ^{ - c \Delta t } \omega $ exactly unless
$ c = 0 $; see \citet[Example 7]{McQu2001} for a counterexample when
$\Phi$ is the implicit midpoint method.

\begin{remark}
  \label{rmk:vector_bilinear}
  The arguments above apply without modification if $\omega$ is a
  vector-valued bilinear form, i.e., a continuous bilinear map
  $ Y \times Y \rightarrow Z $ for some Banach space $Z$.
\end{remark}

\section{Application to conservation laws in PDEs}
\label{sec:conservation}

In this section, we apply the general results of \cref{sec:fe} to
local conservation laws in time-evolution PDEs. We also consider
discrete conservation laws in numerical PDEs, when semidiscretization
in space is combined with numerical integration in time.

\subsection{General approach and examples}
\label{sec:conservation_general}

Let $ \dot{y} = f (y) $ correspond to a time-dependent system of PDEs
on a domain $\Omega$, where the Banach space $Y$ is a function space
(or product of function spaces) on $\Omega$. Suppose that solutions
satisfy a local conservation law, in the form
\begin{equation}
  \label{eqn:claw_diff}
  \dot{ \rho } = - \operatorname{div} J ,
\end{equation}
where $\rho$ and $J$ depend on $y$. The notation is deliberately
suggestive of Maxwell's equations, where $\rho$ is charge density, $J$
is current density, and \eqref{eqn:claw_diff} is local conservation of
charge.

From \cref{thm:fe} and \cref{cor:b-series}, we immediately obtain a
powerful general statement about preservation of local conservation
laws under numerical integration.  If $ \rho = F (y) $, where
$ F \in \mathcal{F} ( Y , Z ) $ and $Z$ is an appropriate space of
densities, then $ F ^\prime (y) f (y) = - \operatorname{div} J (y) $,
and thus an $\mathcal{F}$-functionally equivariant integrator
satisfies a discrete-time version of \eqref{eqn:claw_diff}. For
instance, a Runge--Kutta method preserving $\mathcal{F}$-invariants
satisfies
\begin{equation*}
  \rho _1 = \rho _0 - \Delta t \sum _{ i = 1 } ^s b _i \operatorname{div} J ( Y _i ) .
\end{equation*}
We note that, while $\rho$ is required to be related to $y$ by a
functional in $\mathcal{F}$ (e.g., $\rho$ is affine or quadratic in
$y$), no such restriction is placed on $J$.  In particular, all
B-series methods preserve affine local conservation laws, while those
preserving quadratic invariants also preserve quadratic local
conservation laws. In the case of symplectic Runge--Kutta methods,
\citet[Section 3.1]{FrHy2021} recently proved this by a direct
computation, whereas here it is seen as a particular instance of
quadratic functional equivariance.

In addition to the differential form of the conservation law
\eqref{eqn:claw_diff}, we may also integrate over a compact subdomain
$ K \subset \Omega $ and apply the divergence theorem to get
\begin{equation}
  \label{eqn:claw_int}
  \frac{\mathrm{d}}{\mathrm{d}t} \int _K \rho = - \int _{ \partial K } J \cdot \widehat{ n } ,
\end{equation}
where $ \widehat{ n } $ denotes the outer unit normal to $K$. This may
be seen as an integral form of the conservation law
\eqref{eqn:claw_diff}. In this case, if $ \int _K \rho = F (y) $ with
$ F \in \mathcal{F} ( Y, \mathbb{R} ) $, then an
$\mathcal{F}$-functionally equivariant method satisfies a
discrete-time version of \eqref{eqn:claw_int}. In the case of a
Runge--Kutta method preserving $ \mathcal{F} $-invariants, this has
the form
\begin{equation*}
  \int _K \rho _1 = \int _K \rho _0 - \Delta t \sum _{ i = 1 } ^s b _i \int _{ \partial K } J ( Y _i ) \cdot \widehat{ n } .
\end{equation*}

\begin{example}
  Maxwell's equations in $\mathbb{R}^3$ consist of the vector
  evolution equations
  \begin{equation*}
    \label{eqn:maxwell_evolution}
    \dot{ B } = - \operatorname{curl} E , \qquad \dot{ {D} } = \operatorname{curl} H - J ,
  \end{equation*}
  along with the scalar constraint equations
  \begin{equation*}
    \operatorname{div} B = 0 , \qquad \operatorname{div} {D} = \rho ,
  \end{equation*}
  and the constitutive relations $ {D} = \epsilon E $ and
  $ B = \mu H $. Here, $E$ and $H$ are the electric and magnetic
  fields, $D$ and $B$ are the electric and magnetic flux densities,
  $\epsilon$ and $\mu$ are the electric permittivity and magnetic
  permeability tensors, and $\rho$ and $J$ are charge and current
  density.

  Taking the divergence of the first evolution equation, we see that
  $ \operatorname{div} B $ is a local invariant, so the constraint
  $ \operatorname{div} B = 0 $ is preserved by the evolution. Next,
  interpreting the constraint $ \operatorname{div} {D} = \rho $ to
  define $\rho$ as a function of $D$, we see that taking the
  divergence of the second evolution equation gives the local
  conservation law $ \dot{ \rho } = - \operatorname{div} J $.  Since
  $ \operatorname{div} B $ and $ \operatorname{div} {D} $ are both
  linear in $ y = ( B, {D} ) $, any B-series method will preserve the
  constraint $ \operatorname{div} B = 0 $, together with a discrete
  version of the conservation law relating $\rho$ and $J$.
\end{example}

\begin{example}
  \label{ex:nls}
  Consider the nonlinear Schr\"odinger (NLS) equation,
  \begin{equation*}
    i \dot{ u } +  \Delta u = \phi \bigl( \lvert u \rvert ^2 \bigr) u .
  \end{equation*}
  A direct computation shows that solutions satisfy
  \begin{equation*}
    \frac{ \partial }{ \partial t }  \frac{1}{2} \lvert u \rvert ^2
    = \Im ( \bar{ u } \,i \dot{u}  )\\
    = \Im ( - \bar{ u } \Delta u )\\
    = \Im \bigl(  - \operatorname{div} ( \bar{ u } \operatorname{grad} u ) \bigr)\\
    = - \operatorname{div} \Im ( \bar{ u } \operatorname{grad} u ) ,
  \end{equation*}
  which is a local conservation law for the quadratic functional
  $ F (u) = \frac{1}{2} \lvert u \rvert ^2 $. Since the implicit
  midpoint method is quadratic functionally equivariant, it follows
  that this conservation law is preserved, in the sense that
  \begin{equation*}
    \frac{1}{2} \lvert u _1 \rvert ^2 = \frac{1}{2} \lvert u _0 \rvert ^2 - \Delta t \operatorname{div} \Im \biggl( \frac{ \bar{ u } _0 + \bar{ u } _1 }{ 2 } \operatorname{grad} \frac{ u _0 + u _1 }{2} \biggr).
  \end{equation*}
  More generally, for any quadratic functionally equivariant
  Runge--Kutta method,
  \begin{equation*}
    \frac{1}{2} \lvert u _1 \rvert ^2 = \frac{1}{2} \lvert u _0 \rvert ^2 - \Delta t \sum _{ i = 1 } ^s b _i \operatorname{div} \Im ( \bar{ U } _i \operatorname{grad} U _i ) .
  \end{equation*}
\end{example}

\begin{example}
  \label{ex:wave}
  Consider the wave equation $ \ddot{ u } = \Delta u $, written as the
  first-order system
  \begin{equation*}
    \dot{ u } = p , \qquad \dot{ p } = \Delta u .
  \end{equation*}
  If $ y = ( u, p ) $ is a solution, then
  \begin{equation*}
    \frac{ \partial }{ \partial t } \frac{1}{2} \bigl( p ^2 + \lvert \operatorname{grad} u \rvert ^2 \bigr) = p \dot{ p } + \operatorname{grad} u \cdot \operatorname{grad} \dot{ u } = p \Delta u + \operatorname{grad} u \cdot \operatorname{grad} p = \operatorname{div} ( p \operatorname{grad} u ) ,
  \end{equation*}
  which is a local conservation law for the quadratic functional
  $ F (u, p) = \frac{1}{2} \bigl( p ^2 + \lvert \operatorname{grad} u
  \rvert ^2 \bigr) $. Hence, applying any quadratic functionally
  equivariant Runge--Kutta method gives
  \begin{equation*}
    \frac{1}{2} \bigl( (p _1) ^2 + \lvert \operatorname{grad} u _1 \rvert ^2 \bigr) = \frac{1}{2} \bigl( (p _0) ^2 + \lvert \operatorname{grad} u _0 \rvert ^2 \bigr) + \Delta t \sum _{ i = 1 } ^s b _i \operatorname{div} ( P _i \operatorname{grad} U _i ) .
  \end{equation*}
\end{example}

\subsection{Discrete conservation laws in numerical PDEs} In practice,
of course, we will not be applying numerical integrators to
infinite-dimensional function spaces. Rather, we typically first
semidiscretize in space (e.g., using a finite difference, finite
volume, or finite element scheme), yielding a system
$ \dot{ y } _h = f _h ( y _h ) $ on a finite-dimensional vector space
$ Y _h $ to which we can apply a numerical integrator.

Suppose the spatial discretization scheme is such that it preserves a
semidiscrete conservation law
$ \dot{ \rho } _h = - \operatorname{div} _h J _h $, where
$ \operatorname{div} _h $ is some ``discrete divergence'' operator.
Then it follows from the previous section that, if
$ \rho _h = F _h ( y _h ) $ for some
$ F _h = \mathcal{F} ( Y _h , Z _h ) $, then applying an
$\mathcal{F}$-functionally equivariant numerical integrator yields a
fully discrete conservation law corresponding to
\eqref{eqn:claw_diff}. We illustrate this with a few examples, which
are semidiscretized versions of those considered in the previous
section.

\begin{example}
  \citet{Nedelec1980} introduced a finite element semidiscretization
  of Maxwell's equations, in which $E$ and $B$ are approximated by
  piecewise-polynomial vector fields
  $ E _h \in H ( \operatorname{curl} ; \Omega ) $ and
  $ B _h \in H ( \operatorname{div}; \Omega ) $. This method may be written as
  \begin{equation*}
    \dot{ B } _h = - \operatorname{curl} E _h , \qquad \int _\Omega \dot{ {D}} _h \cdot v _h = \int _\Omega ( H _h \cdot \operatorname{curl} v _h - J _h \cdot v _h ) ,
  \end{equation*}
  where $ {D} _h = \epsilon E _h $, $ H _h = \mu ^{-1} B _h $, and
  $ v _h $ is any vector field from the same space as $ E _h $.

  Taking the divergence of the first equation gives
  $ \operatorname{div} \dot{ B } _h = 0 $, so the constraint
  $ \operatorname{div} B _h = 0 $ is preserved by the evolution. For
  the second, when $ v _h = -\operatorname{grad} \phi _h $ for a
  piecewise-polynomial scalar field $ \phi _h $, we get
  \begin{equation*}
    -\int _\Omega \dot{ {D} } _h \cdot \operatorname{grad} \phi _h = \int _\Omega J _h \cdot \operatorname{grad} \phi _h ,
  \end{equation*}
  which we may write as
  $ \operatorname{div} _h \dot{ {D} } _h = - \operatorname{div} _h J
  _h $. Thus, taking $ \rho _h = \operatorname{div} _h {D} _h $
  implies the semidiscrete charge conservation law
  $ \dot{ \rho } _h = - \operatorname{div} _h J _h $. (See
  \citet{BeSt2021} for a hybridization of \citeauthor{Nedelec1980}'s
  method that preserves a stronger form of this conservation law,
  using $ \operatorname{div} $ rather than $ \operatorname{div} _h $.)
  Since $ \operatorname{div} B _h $ and
  $ \operatorname{div} _h {D} _h $ are linear in
  $ y _h = ( B _h , {D} _h ) $, any B-series method will preserve
  $ \operatorname{div} B _h = 0 $ exactly and give a discrete-time
  version of the charge conservation law relating $ \rho _h $ and
  $ J _h $.
\end{example}

\begin{example}
  For the one-dimensional NLS equation, the finite-difference
  semidiscretization
  \begin{equation*}
    i \dot{ u } _k + \frac{ u _{ k + 1 } - 2 u _k + u _{ k -1 } }{ h ^2 } = \phi \bigl( \lvert u _k \rvert ^2 \bigr) u _k 
  \end{equation*}
  satisfies the semidiscrete local conservation law
  \begin{equation*}
    \frac{\mathrm{d}}{\mathrm{d}t} \frac{1}{2} \lvert u _k \rvert ^2 = - \frac{ 1 }{ h } \Biggl[ \Im \Biggl( \biggl(\frac{ \bar{u} _k + \bar{u} _{ k + 1 } }{ 2 } \biggr) \biggl( \frac{ u _{ k + 1 } - u _k }{ h } \biggr) \Biggr) - \Im \Biggl( \biggl( \frac{ \bar{u} _{k-1} + \bar{u} _k }{ 2 } \biggr) \biggl( \frac{ u _k - u _{k-1} }{ h } \biggr) \Biggr) \Biggr],
  \end{equation*}
  where the right-hand side is a difference of midpoint approximations
  to $ \Im ( \bar{u} \,\partial _x u ) $. Hence, a discrete-time
  version of this conservation law is preserved by any B-series method
  that preserves quadratic invariants.
\end{example}

\begin{example}
  \label{ex:nls_discrete}
  For the one-dimensional wave equation, consider the
  finite-difference semidiscretization
  \begin{equation*}
    \dot{ u } _k = p _k , \qquad \dot{ p } _k = \frac{ u _{ k + 1 } - 2 u _k + u _{ k -1 } }{ h ^2 } .
  \end{equation*}
  If we define
  \begin{equation*}
    \rho _k = \frac{1}{2} p _k ^2 + \frac{ 1 }{ 4 } \biggl( \frac{ u _{ k + 1 } - u _k }{ h } \biggr) ^2 + \frac{ 1 }{ 4 } \biggl( \frac{ u _k - u _{k-1} }{ h } \biggr) ^2 ,
  \end{equation*} 
  which is a finite-difference approximation to
  $ \frac{1}{2} \bigl( p ^2 + ( \partial _x u ) ^2 \bigr) $, then a
  short calculation gives the semidiscrete conservation law
  \begin{equation*}
    \dot{ \rho } _k = \frac{ 1 }{ h } \Biggl[ \biggl( \frac{ p _k + p _{ k + 1 } }{ 2 } \biggr) \biggl( \frac{ u _{ k + 1 } - u _k }{ h } \biggr) -  \biggl( \frac{ p _{ k -1 } + p _k }{ 2 } \biggr) \biggl( \frac{ u _k - u _{ k -1 } }{ h } \biggr) \Biggr],
  \end{equation*}
  where the right-hand side is a difference of midpoint approximations
  to $ p \, \partial _x u $. As in the previous example, a
  discrete-time version of this conservation law is therefore
  preserved by any B-series method that preserves quadratic
  invariants.
\end{example}

\subsection{Remarks on quadratic conservation laws arising from point
  symmetries}

Conservation laws with quadratic densities are common in partial
differential and differential-difference equations because of their
association with linear symmetries of Hamiltonian PDEs. (See, e.g.,
\citet{Olver1993}.) However, not all such symmetries are easily
preserved under semidiscretization. We focus here on affine point
symmetries, those arising from actions on the field variables.

For example, the one-dimensional NLS equation may be written in the
form
\begin{equation*}
  i \dot{ u } = \frac{ \delta }{ \delta \bar{ u } } \int _\Omega \Bigl( \lvert \partial _x u \rvert ^2 + V \bigl( \lvert u \rvert ^2 \bigr) \Bigr),
\end{equation*}
where $ V ^\prime = \phi $ and $ \delta / \delta \bar{u} $ is the
variational derivative with respect to $ \bar{u} $. The integrand
$ \mathcal{H} = \lvert \partial _x u \rvert ^2 + V \bigl( \lvert u
\rvert ^2 \bigr) $ is called the \emph{Hamiltonian density}. Observe
that $\mathcal{H}$ is invariant under the diagonal $ U (1) $ action
$ (u, \partial _x u ) \mapsto ( e ^{ i \alpha } u , e ^{ i \alpha }
\partial _x u ) $, where
$ \alpha \in \mathbb{R} \cong \mathfrak{u} (1) $. This point symmetry
leads to the local conservation law for
$ \rho = \frac{1}{2} \lvert u \rvert ^2 $ in \cref{ex:nls}. More
generally, any Hamiltonian density of the form
$ \mathcal{H} = \mathcal{H} \bigl( \lvert u \rvert ^2 , \lvert
\partial _x u \rvert ^2 , \bar{ u } \,\partial _x u \bigr) $ has the
same point symmetry, and hence
$ i \dot{ u } = \frac{ \delta }{ \delta \bar{u} } \int _\Omega
\mathcal{H} $ has a local conservation law for
$ \rho = \frac{1}{2} \lvert u \rvert ^2 $.

Similarly, the one-dimensional semidiscretized NLS equation in
\cref{ex:nls_discrete} can be written
\begin{equation*}
  i \dot{ u } _k = \frac{ \partial }{ \partial \bar{u} _k } \sum _\ell \biggl( \Bigl\lvert \frac{ u _{ \ell + 1 } - u _\ell }{ h } \Bigr\rvert ^2 + \frac{ V \bigl( \lvert u _\ell \rvert ^2 \bigr) + V \bigl( \lvert u _{\ell+1} \rvert ^2 \bigr) }{ 2 } \biggr),
\end{equation*}
where the summand can be viewed as a discrete Hamiltonian density
$ \mathcal{H} _h $. The invariance of $ \mathcal{H} _h $ under the
point symmetry $ u _\ell \mapsto e ^{ i \alpha } u _\ell $,
$ u _{ \ell + 1 } \mapsto e ^{ i \alpha } u _{ \ell + 1 } $, yields
the semidiscrete local conservation law for
$ \rho _k = \frac{1}{2} \lvert u _k \rvert ^2 $ obtained in
\cref{ex:nls_discrete}. More generally, we get such a local
conservation law whenever the discrete Hamiltonian density has the
form
$ \mathcal{H} _h = \mathcal{H} _h \bigl( \lvert u _\ell \rvert ^2 ,
\lvert u _{ \ell + 1 } \rvert ^2 , \bar{ u } _\ell u _{ \ell + 1 }
\bigr) $.

A related example involves orthogonal (rather than unitary) point
symmetry. Suppose $u (t, x) $ and its conjugate momentum
$ p ( t, x ) $ both take values in $\mathbb{R}^3$, and let
$ A \in O (3) $ act by
$ z = ( u , p, \partial _x u , \partial _x p ) \mapsto ( A u, A p, A
\partial _x u , A \partial _x p ) $. Then any Hamiltonian density that
depends only on the 10 invariants $ z _i \cdot z _j $,
$ 1 \leq i \leq j \leq 4 $, is $ O (3) $ invariant and thus has a
local conservation law for $ \rho = u \times p $. Like the $ U (1) $
point symmetry discussed above, this $ O (3) $ point symmetry is
preserved under a wide class of lattice semidiscretizations, which
have corresponding semidiscrete quadratic conservation laws.

By contrast with point symmetries, symmetries that involve spatial
translations are typically broken by semidiscretization. However,
special semidiscretizations can be constructed that preserve versions
of the associated conservation laws, although these are generally not
symplectic. An example is provided by the Korteweg--de~Vries equation
\begin{equation*}
  \partial _t u = \partial _x ( \alpha u ^2 ) + \nu \partial _x ^3 u ,
\end{equation*}
which has a local conservation law with $ \rho = u ^2 $. The
semidiscretization
\begin{equation*}
  \dot{ u } _k = \frac{ \alpha }{ 2 h } \bigl[ \theta ( u _{ k + 1 } ^2 - u _{ k -1 } ^2 ) + 2 ( 1 - \theta ) u _k ( u _{ k + 1 } - u _{ k -1 } ) \bigr] + \frac{ \nu }{ 2 h ^3 } ( u _{ k + 2 } - 2 u _{ k + 1 } + 2 u _{ k -1 } - u _{ k - 2 } ) 
\end{equation*}
has a semidiscrete conservation law with density $ \rho _k = u _k ^2 $
only for the parameter $ \theta = 2/3 $ (\citet{AsMc2004}).

\citet{FrHy2021} give general techniques for constructing
finite-difference semidiscretizations preserving several local
conservation laws---linear, quadratic, or otherwise---with many
examples. When such methods are used in conjunction with B-series
methods for time integration, it follows from \cref{thm:fe} and
\cref{cor:b-series} that affine local conservation laws are always
preserved in a discrete sense, while quadratic local conservation laws
are preserved by any B-series method that preserves quadratic
invariants.

\section{Multisymplectic integrators}
\label{sec:multisymplectic}

In this section, we apply the foregoing theory to the multisymplectic
conservation law for canonical Hamiltonian PDEs and its preservation
by numerical integrators. Since this is a quadratic local conservation
law depending on variations of solutions, it follows that B-series
methods preserving quadratic invariants also preserve a discrete-time
version of the multisymplectic conservation law. Furthermore, we
discuss techniques for spatial semidiscretization that preserve a
semidiscrete multisymplectic conservation law, reviewing some known
results for finite-difference semidiscretization and introducing new
results for finite-element semidiscretization. Consequently, when such
methods are used in conjunction with B-series methods preserving
quadratic invariants, the resulting method will satisfy a fully
discrete multisymplectic conservation law.

\subsection{Canonical Hamiltonian PDEs}
Before discussing the canonical Hamiltonian formalism for
time-evolution PDEs, we first quickly recall the stationary
(time-independent) case, following the treatment in \citet{McSt2020}.

Given a spatial domain $ \Omega \subset \mathbb{R}^m $ with
coordinates $ x = ( x ^1 , \ldots, x ^m ) $, let
$ u \colon \Omega \rightarrow \mathbb{R}^n $ and
$ \sigma \colon \Omega \rightarrow \mathbb{R} ^{ m n } $ be unknown
fields. The \emph{de~Donder--Weyl equations}
\citep{deDonder1935,Weyl1935} for a Hamiltonian
$ H \colon \Omega \times \mathbb{R}^n \times \mathbb{R} ^{ m n }
\rightarrow \mathbb{R} $, $ H = H ( x, u, \sigma ) $, are
\begin{equation}
  \label{eqn:ddw_space}
  \partial _\mu u ^i = \frac{ \partial H }{ \partial \sigma _i ^\mu } , \qquad - \partial _\mu \sigma _i ^\mu = \frac{ \partial H }{ \partial u ^i } ,
\end{equation}
where $ \mu = 1 , \ldots, m $ and $ i = 1 , \ldots, n $. Here and
henceforth, we use the Einstein index convention of summing over
repeated indices; for instance, $ \partial _\mu \sigma _i ^\mu $ has
an implied sum over $\mu$ and therefore corresponds to the divergence
of the vector field $ \sigma _i $.

Now, for time-dependent problems, we let $u = u ^i ( t, x ) $ and
$ \sigma = \sigma _i ^\mu ( t, x ) $ depend on
$t \in ( t _0, t _1 ) $, and we introduce an additional unknown field
$ p = p _i ( t, x ) $. The de~Donder--Weyl equations for
$ H \colon ( t _0, t _1 ) \times \Omega \times \mathbb{R}^n \times
\mathbb{R}^n \times \mathbb{R} ^{ m n } $,
$ H = H ( t, x, u, p, \sigma ) $, are then given by
\begin{equation}
  \label{eqn:ddw_time}
  \dot{ u } ^i = \frac{ \partial H }{ \partial p _i } , \qquad \partial _\mu u ^i = \frac{ \partial H }{ \partial \sigma _i ^\mu } , \qquad - ( \dot{p} _i + \partial _\mu \sigma _i ^\mu ) = \frac{ \partial H }{ \partial u ^i } .
\end{equation}
Note that \eqref{eqn:ddw_time} is simply \eqref{eqn:ddw_space} in
$ (m + 1) $-dimensional spacetime, where we have adopted the special
notation $ t = x ^0 $ and $ p _i = \sigma _i ^0 $. Moreover, the
special case $ m = 0 $ recovers canonical Hamiltonian mechanics on
$ \mathbb{R} ^{ 2 n } $.

For $ m > 0 $, the de~Donder--Weyl equations are not in the form
$ \dot{ y } = f (y) $, since we have expressions for $ \dot{ u } $ and
$ \dot{ p } $ but not $ \dot{ \sigma } $. To deal with this, we assume
that the second equation of \eqref{eqn:ddw_time} defines $\sigma$ as
an implicit function of $t$, $x$, $u$, $p$, and
$ \operatorname{grad} u $. By the implicit function theorem, this is
true (at least locally) if the $ m n \times m n $ matrix
$ \partial ^2 H / ( \partial \sigma _i ^\mu \partial \sigma _j ^\nu) $
is nondegenerate. Therefore, we may eliminate the second equation and
substitute this expression for $\sigma$ into the other two
equations. Assuming the Hamiltonian does not depend on $t$, this gives
a system of the form $ \dot{y} = f (y) $ with $ y = ( u, p ) $.

\begin{example}
  Let $ n = 1 $, so that $u$ and $p$ are scalar fields and $\sigma$ is
  a vector field on $\Omega$, and take
  $ H = \frac{1}{2} \bigl( p ^2 - \lvert \sigma \rvert ^2 ) $. Then
  the de~Donder--Weyl equations are
  \begin{equation*}
    \dot{ u } = p , \qquad \operatorname{grad} u = - \sigma , \qquad - (\dot{ p } + \operatorname{div} \sigma) = 0 .
  \end{equation*}
  Eliminating the second equation and substituting
  $ \sigma = - \operatorname{grad} u $ into the third, we obtain the
  first-order form of the wave equation with $ y = ( u, p ) $, as in
  \cref{ex:wave}.
\end{example}

\subsection{The multisymplectic conservation law}

For Hamiltonian ODEs, the symplectic conservation law is a statement
about variations of solutions to Hamilton's equations. Similarly, for
Hamiltonian PDEs, the multisymplectic conservation law is a statement
about variations of solutions to the de~Donder--Weyl equations.

\begin{definition}
  Let $ ( u, p, \sigma ) $ be a solution to \eqref{eqn:ddw_time}. A
  \emph{(first) variation} of $ ( u, p , \sigma ) $ is a solution
  $ ( v, r, \tau ) $ to the linearized problem
  \begin{alignat*}{4}
    \dot{ v } ^i
    &{}={}& \frac{ \partial ^2 H }{ \partial p _i \partial u ^j } v ^j
    &{}+{}& \frac{ \partial ^2 H }{ \partial p _i \partial p _j } r _j
    &{}+{}& \frac{ \partial ^2 H }{ \partial p _i \partial \sigma _j ^\nu } \tau _j ^\nu ,\\
    \partial _\mu v ^i
    &{}={}& \frac{ \partial ^2 H }{ \partial \sigma _i ^\mu \partial u ^j } v ^j
    &{}+{}& \frac{ \partial ^2 H }{ \partial \sigma _i ^\mu \partial p _j } r _j
    &{}+{}& \frac{ \partial ^2 H }{ \partial \sigma _i ^\mu \partial \sigma _j ^\nu } \tau _j ^\nu ,\\
    - ( \dot{ r } _i + \partial _\mu \tau _i ^\mu )
    &{}={}& \frac{
      \partial ^2 H }{ \partial u ^i \partial u ^j } v ^j
    &{}+{}&
    \frac{ \partial ^2 H }{ \partial u ^i \partial p _j } r _j
    &{}+{}&
    \frac{ \partial ^2 H }{ \partial u ^i \partial \sigma _j ^\nu }
    \tau _j ^\nu ,
  \end{alignat*}
  where the Hessians on the right-hand side are evaluated at
  $ ( t, x, u, p, \sigma ) $.
\end{definition}

On the space
$ \mathbb{R} ^n \times \mathbb{R}^n \times \mathbb{R} ^{ m n } \ni (
u, p, \sigma ) $, we now define the canonical $2$-forms
$ \omega ^0 = \mathrm{d} u ^i \wedge \mathrm{d} p _i $ and
$ \omega ^\mu = \mathrm{d} u ^i \wedge \mathrm{d} \sigma _i ^\mu $ for
$ \mu = 1 , \ldots, m $. The \emph{multisymplectic conservation law}
states that, for any pair of variations $ ( v, r, \tau ) $ and
$ ( v ^\prime , r ^\prime , \tau ^\prime ) $, we have
\begin{equation*}
  \partial _t \Bigl( \omega ^0 \bigl( ( v, r, \tau ) , (v ^\prime , r ^\prime , \tau ^\prime ) \bigr) \Bigr) = - \partial _\mu \Bigl( \omega ^\mu \bigl( ( v, r, \tau ) , (v ^\prime , r ^\prime , \tau ^\prime ) \bigr) \Bigr) ,
\end{equation*}
that is,
\begin{equation*}
\partial _t ( v ^i r _i ^\prime - v ^{ \prime i } r _i ) = - \partial _\mu ( v ^i \tau _i ^{ \prime \mu } - v ^{ \prime i } \tau _i ^\mu ) .
\end{equation*}
The proof is simply a calculation, using the symmetry of the
Hessian. We abbreviate the multisymplectic conservation law as
\begin{equation}
  \label{eqn:mscl_diff}
  \dot{ \omega } ^0 = - \partial _\mu \omega ^\mu ,
\end{equation}
with the understanding that both sides are evaluated on variations of
solutions to \eqref{eqn:ddw_time}. In the special case $ m = 0 $, we
recover the usual symplectic conservation law for Hamiltonian ODEs.
As with the conservation laws in \cref{sec:conservation_general}, we
may also integrate \eqref{eqn:mscl_diff} over a compact subdomain
$ K \subset \Omega $ and apply the divergence theorem to get
\begin{equation}
  \label{eqn:mscl_int}
  \int _K \dot{ \omega } ^0 \,\mathrm{d}^m x = - \int _{ \partial K } \omega ^\mu \,\mathrm{d}^{m-1} x_\mu ,
\end{equation}
which is an integral form of the multisymplectic conservation
law. Here,
$ \mathrm{d} ^m x \coloneqq \mathrm{d} x ^1 \wedge \cdots \wedge
\mathrm{d} x ^m $ is the standard Euclidean volume form on
$ \mathbb{R}^m $ and
$ \,\mathrm{d}^{m-1} x_\mu \coloneqq \iota _{ e _\mu } \,\mathrm{d}^m
x $ is its interior product with the $\mu$th standard basis
vector. Again, we interpret \eqref{eqn:mscl_int} to mean that the
equality holds when both sides are evaluated on arbitrary variations
of solutions.

\begin{remark}
  \label{rmk:mscl_symplectic}
  If $\Omega$ is compact, and boundary conditions are chosen so that
  $ \omega ^\mu \,\mathrm{d}^{m-1} x_\mu = 0 $ on $ \partial \Omega $,
  then taking $ K = \Omega $ in \eqref{eqn:mscl_int} gives
  $ \int _\Omega \dot{ \omega } ^0 \,\mathrm{d}^m x = 0 $. This may be
  interpreted as invariance of the symplectic form
  $ \omega = \int _\Omega \omega ^0 \,\mathrm{d}^m x $ on the
  infinite-dimensional phase space $Y$.
\end{remark}

\subsection{Discrete-time multisymplectic conservation laws for
  numerical integrators}

As before, assume that the Hamiltonian does not depend on $t$ and that
$\sigma$ is an implicit function of the remaining variables, so that
\eqref{eqn:ddw_time} can be reduced to $ \dot{y} = f (y) $ with
$ y = ( u, p ) $. It follows that variations evolve according to
$ \dot{ \eta } = f ^\prime (y) \eta $ with $ \eta = ( v, r ) $, where
$\tau = \sigma ^\prime (y) \eta $. Hence, \eqref{eqn:mscl_diff} may be
seen as a quadratic conservation law involving variations of $y$, and
the results of \cref{sec:fe} immediately apply to give the following.

\begin{theorem}
  \label{thm:mscl}
  Suppose that \eqref{eqn:ddw_time} can be written as
  $ \dot{y} = f (y) $ with $ y = ( u , p ) $, where $\sigma$ is an
  implicit function of the other variables. Let $ \xi = ( v, r ) $ and
  $ \eta = ( v ^\prime , r ^\prime ) $, with
  $ \tau = \sigma ^\prime (y) \xi $ and
  $ \tau ^\prime = \sigma ^\prime (y) \eta $, and define the augmented
  vector field
  \begin{equation*}
    g ( y, \xi, \eta, z ) = \biggl( f (y) , f ^\prime (y) \xi, f ^\prime (y) \eta, - \partial _\mu \Bigl( \omega ^\mu \bigl( ( v, r, \tau  ), ( v ^\prime , r ^\prime , \tau ^\prime )  \bigr) \Bigr) \biggr) .
  \end{equation*}
  If $\Phi$ is an affine equivariant method preserving quadratic
  invariants, then
  \begin{equation*}
    \Phi _g \Bigl( y _0 , \xi _0, \eta _0, \omega ^0 \bigl( ( v _0, r _0, \tau _0 ) , ( v _0 ^\prime , r _0 ^\prime , \tau _0 ^\prime ) \bigr) \Bigr) = \Bigl( y _1 , \xi _1, \eta _1 , \omega ^0 \bigl( ( v _1 , r _1 , \tau _1 ) , ( v _1 ^\prime , r _1 ^\prime , \tau _1 ^\prime ) \bigr) \Bigr),
  \end{equation*}
  where $ y _1 = \Phi _f ( y _0 ) $,
  $ \xi _1 = \Phi _f ^\prime ( y _0 ) \xi _0 $, and
  $ \eta _1 = \Phi _f ^\prime ( y _0 ) \eta _0 $.
\end{theorem}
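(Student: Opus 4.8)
The plan is to read \cref{thm:mscl} as a direct instance of \cref{cor:fe_variation}, extended to two variations via \cref{rmk:multiple_variations}, once the multisymplectic conservation law \eqref{eqn:mscl_diff} has been packaged as an augmented vector field. The only substantive input beyond the general machinery of \cref{sec:fe} is that this packaging survives the elimination of $\sigma$.

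First I would note that $ \omega ^0 = \mathrm{d} u ^i \wedge \mathrm{d} p _i $, evaluated on the pair of variations, is $ \omega ^0 \bigl( ( v, r, \tau ), ( v ^\prime, r ^\prime, \tau ^\prime ) \bigr) = v ^i r _i ^\prime - v ^{ \prime i } r _i $, which depends only on $ \xi = ( v, r ) $ and $ \eta = ( v ^\prime, r ^\prime ) $, and not on $ \tau, \tau ^\prime $ or on $y$. So I would set $ F ( y, \xi, \eta ) \coloneqq \omega ^0 \bigl( ( v, r, \tau ), ( v ^\prime, r ^\prime, \tau ^\prime ) \bigr) $, taking values in the relevant Banach space $Z$ of densities on $\Omega$. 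Being a continuous bilinear form in $( \xi, \eta )$, this $F$ is a quadratic map on the product space---exactly as for the bilinear forms treated in \cref{sec:symplectic} and \cref{rmk:vector_bilinear}---so it lies in the quadratic class, which satisfies \cref{assumption:affine}.

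Next I would differentiate $F$ along the combined flow of $ \dot{y} = f (y) $, $ \dot{\xi} = f ^\prime (y) \xi $, $ \dot{\eta} = f ^\prime (y) \eta $. Since $F$ depends on $( \xi, \eta )$ bilinearly and not on $y$, its derivative in the direction $ \bigl( f (y), f ^\prime (y) \xi, f ^\prime (y) \eta \bigr) $ is just $ \frac{\mathrm{d}}{\mathrm{d}t} \bigl( v ^i r _i ^\prime - v ^{ \prime i } r _i \bigr) = \dot{\omega} ^0 $ evaluated on the variations. By the multisymplectic conservation law \eqref{eqn:mscl_diff}, this equals $ - \partial _\mu \bigl( \omega ^\mu ( ( v, r, \tau ), ( v ^\prime, r ^\prime, \tau ^\prime ) ) \bigr) $ with $ \tau = \sigma ^\prime (y) \xi $ and $ \tau ^\prime = \sigma ^\prime (y) \eta $---which is precisely the last component of the augmented vector field $g$ in the statement. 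Hence $g$ is exactly the vector field that \cref{cor:fe_variation} and \cref{rmk:multiple_variations} associate to $F$, and, since $\Phi$ is affine equivariant and preserves quadratic invariants, that corollary applies verbatim to give the claimed identity, with $ \xi _1 = \Phi _f ^\prime ( y _0 ) \xi _0 $ and $ \eta _1 = \Phi _f ^\prime ( y _0 ) \eta _0 $ supplied by closure under differentiation (\cref{thm:differentiation}).

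The hard part will be justifying the one step glossed over above: that \eqref{eqn:mscl_diff} survives the reduction of \eqref{eqn:ddw_time} to $ \dot{y} = f (y) $. Concretely, one must check that eliminating the second de~Donder--Weyl equation via the implicit function theorem intertwines the full variational equations for $ ( v, r, \tau ) $ with $ \dot{\eta} = f ^\prime (y) \eta $ together with the constraint $ \tau = \sigma ^\prime (y) \xi $, so that the pair of variations appearing in \eqref{eqn:mscl_diff} is the same pair on which $F$ and its flux are evaluated. Once that is in hand, the remaining work is only bookkeeping in the density/function-space setting---verifying that $ \partial _\mu $ acts on the $Z$-valued outputs as expected and that $F$ is G\^ateaux differentiable into $Z$.
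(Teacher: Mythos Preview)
Your proposal is correct and follows essentially the same route as the paper: both hinge on the observation that $\omega^0\bigl((v,r,\tau),(v',r',\tau')\bigr) = v^i r_i' - v'^i r_i$ is bilinear in $\xi,\eta$ alone (independent of $y,\tau,\tau'$), after which the result is a direct application of the machinery in \cref{sec:symplectic} (the paper cites \eqref{eqn:symplectic} and \cref{rmk:vector_bilinear}, which is the same content as your \cref{cor:fe_variation}/\cref{rmk:multiple_variations}). Your ``hard part'' is overstated: the intertwining of variations you worry about is exactly what the paragraph preceding the theorem sets up, and once $\tau = \sigma'(y)\xi$, $\tau' = \sigma'(y)\eta$ are taken as definitions (as in the theorem statement), the identification of the last component of $g$ with $F'(y,\xi,\eta)\bigl(f(y),f'(y)\xi,f'(y)\eta\bigr)$ is just the multisymplectic conservation law \eqref{eqn:mscl_diff} read in the reduced variables---no further analytic work is needed.
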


\begin{proof}
  The key observation is that
  $ \omega ^0 \bigl( ( v , r , \tau ) , ( v ^\prime , r ^\prime , \tau
  ^\prime ) \bigr) = v ^i r _i ^\prime - v ^{ \prime i } r _i $ is
  quadratic in $\xi$ and $\eta$ alone, so it is not affected by the
  (possibly nonlinear) dependence of $\sigma$ and its variations on
  the other variables. Hence, the result follows from
  \eqref{eqn:symplectic} and \cref{rmk:vector_bilinear}.
\end{proof}

\begin{corollary}
  For a Runge--Kutta method preserving quadratic invariants, we have
  \begin{multline*}
    \omega ^0 \bigl( ( v _1 , r _1, \tau _1 ) , ( v _1 ^\prime , r _1 ^\prime , \tau _1 ^\prime ) \bigr) \\
    = \omega ^0 \bigl( ( v _0, r _0, \tau _0 ) , ( v _0 ^\prime , r
    _0 ^\prime, \tau _0 ^\prime ) \bigr) - \Delta t \sum _{ i = 1 }
    ^s b _i \partial _\mu \Bigl( \omega ^\mu \bigl( ( V _i , R _i, T
    _i ), (V _i ^\prime, R _i ^\prime, T _i ^\prime ) \bigr) \Bigr).
  \end{multline*}
  This may be written equivalently as
  \begin{equation*}
    (\mathrm{d} u _1 ) ^j \wedge ( \mathrm{d} p _1  ) _j  = ( \mathrm{d} u _0 ) ^j \wedge ( \mathrm{d} p _0 ) _j  - \Delta t \sum _{ i = 1 } ^s b _i \partial _\mu \bigl(  ( \mathrm{d} U _i ) ^j \wedge ( \mathrm{d} \Sigma _i ) _j ^\mu \bigr) .
  \end{equation*}
\end{corollary}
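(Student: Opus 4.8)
The plan is to obtain both identities from \cref{thm:mscl} by specializing the one-step map $\Phi_g$ to a Runge--Kutta method, in exactly the manner that \eqref{eqn:rk_symplectic} was obtained from \eqref{eqn:symplectic}.

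First I would write down the Runge--Kutta update for the augmented variable. Applying the method to the system defining $g$ in \cref{thm:mscl}, the internal stages for the $(y,\xi,\eta)$-block are the usual stages $Y_i$, $\Xi_i = (V_i,R_i)$, $\mathrm{H}_i = (V_i^\prime,R_i^\prime)$ of the method applied to $\dot y = f(y)$ and its linearization (cf.\ \cref{ex:rk,cor:fe_variation,rmk:multiple_variations}). Writing $T_i = \sigma^\prime(Y_i)\Xi_i$ and $T_i^\prime = \sigma^\prime(Y_i)\mathrm{H}_i$ for the corresponding internal values of $\tau$ and $\tau^\prime$, the last slot of $g$ evaluated at the stages is $-\partial_\mu\bigl(\omega^\mu((V_i,R_i,T_i),(V_i^\prime,R_i^\prime,T_i^\prime))\bigr)$, so the augmented variable is updated by $z_1 = z_0 - \Delta t\sum_{i=1}^s b_i\,\partial_\mu\bigl(\omega^\mu(\cdots)\bigr)$. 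By \cref{thm:mscl}, taking $z_0 = \omega^0\bigl((v_0,r_0,\tau_0),(v_0^\prime,r_0^\prime,\tau_0^\prime)\bigr)$ forces $z_1 = \omega^0\bigl((v_1,r_1,\tau_1),(v_1^\prime,r_1^\prime,\tau_1^\prime)\bigr)$, which is the first displayed identity.

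For the equivalent, coordinate-free form I would observe that $\omega^0\bigl((v,r,\tau),(v^\prime,r^\prime,\tau^\prime)\bigr) = v^i r_i^\prime - v^{\prime i} r_i$ and $\omega^\mu\bigl((v,r,\tau),(v^\prime,r^\prime,\tau^\prime)\bigr) = v^i \tau_i^{\prime\mu} - v^{\prime i}\tau_i^\mu$ are precisely the values of the canonical $2$-forms $\mathrm{d}u^j\wedge\mathrm{d}p_j$ and $\mathrm{d}u^j\wedge\mathrm{d}\sigma_j^\mu$ on the pair of variations $\xi = (v,r)$, $\eta = (v^\prime,r^\prime)$ (with $\tau,\tau^\prime$ determined from $\sigma^\prime$). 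Since $\xi_1 = \Phi_f^\prime(y_0)\xi_0$ and $\eta_1 = \Phi_f^\prime(y_0)\eta_0$ by closure under differentiation (\cref{thm:differentiation}), and the internal variations $\Xi_i,\mathrm{H}_i$ are obtained from $y_0$ through the internal-stage maps, each quadratic-in-variations quantity appearing in the first identity is the pullback of one of these $2$-forms along $\Phi_f$ or along an internal-stage map; writing those pullbacks as $(\mathrm{d}u_1)^j\wedge(\mathrm{d}p_1)_j$ and $(\mathrm{d}U_i)^j\wedge(\mathrm{d}\Sigma_i)_j^\mu$, and carrying $\partial_\mu$ along as the spatial divergence exactly as in \cref{sec:conservation}, gives the second identity.

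I expect the only point requiring genuine care to be the bookkeeping of the $\tau$-stages: since the augmented vector field $g$ has no $\tau$-component, $T_i$ and $T_i^\prime$ are not Runge--Kutta stages but the algebraic quantities $\sigma^\prime(Y_i)\Xi_i$ and $\sigma^\prime(Y_i)\mathrm{H}_i$, and one must check that evaluating the last slot of $g$ at $(Y_i,\Xi_i,\mathrm{H}_i)$ really produces $-\partial_\mu\omega^\mu\bigl((V_i,R_i,T_i),(V_i^\prime,R_i^\prime,T_i^\prime)\bigr)$ with these values. This is immediate from the definition of $g$ together with the chain-rule relation $\tau = \sigma^\prime(y)\xi$; everything else is substitution into \cref{thm:mscl} and a change of notation.
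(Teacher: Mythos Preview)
Your proposal is correct and follows exactly the route the paper intends: the corollary is stated without proof as an immediate specialization of \cref{thm:mscl} to the Runge--Kutta case, obtained in the same way that \eqref{eqn:rk_symplectic} was obtained from \eqref{eqn:symplectic}. Your careful remark about the $\tau$-stages being algebraic quantities $T_i = \sigma^\prime(Y_i)\Xi_i$ rather than genuine Runge--Kutta stages is the only nontrivial bookkeeping, and you handle it correctly.
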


\subsection{Multisymplectic semidiscretization on rectangular grids}

If $\Omega$ is a Cartesian product of intervals, equipped with a
rectangular finite-difference grid, there is a substantial literature
on spatial semidiscretization such that a semidiscrete multisymplectic
conservation law holds. We refer the reader in particular to the
following (non-exhaustive) list of references:
\citet*{Reich2000,BrRe2001,RyMc2008,McRySu2014}. These
semidiscretization schemes generally apply a symplectic Runge--Kutta
or partitioned Runge--Kutta method in each of the spatial
directions. In light of \cref{sec:fe}, the semidiscrete
multisymplectic conservation law may be seen as resulting from $m$
applications of \eqref{eqn:rk_symplectic} or its generalization to
partitioned methods in \cref{sec:additive_partitioned}.

In one dimension of space, \citet{SuXi2020} have recently investigated
multisymplectic semidiscretization using discontinuous Galerkin finite
element methods.

\subsection{Multisymplectic semidiscretization with hybrid finite
  element methods}

In \citet{McSt2020}, we developed a framework for multisymplectic
discretization of time-independent Hamiltonian PDEs by hybrid finite
element methods, including hybridizable discontinuous Galerkin methods
(cf.~\citet*{CoGoLa2009}). In this section, we show that those same
methods may be used for semidiscretization of time-dependent
Hamiltonian PDEs, and that a semidiscrete multisymplectic conservation
law holds. Consequently, when combined with a symplectic numerical
integrator for time discretization, the resulting method satisfies a
fully discrete multisymplectic conservation law in spacetime. Unlike
the methods discussed in the previous section, these methods may be
applied to unstructured meshes on non-rectangular domains.

Suppose that $\Omega \subset \mathbb{R}^m $ is polyhedral, and let
$ \mathcal{T} _h $ be a simplicial triangulation of $\Omega$ by
$m$-simplices $ K \in \mathcal{T} _h $, where
$ \mathcal{E} _h = \bigcup _{ K \in \mathcal{T} _h } \partial K $
denotes the set of $ ( m -1 ) $-dimensional facets. We specify finite
element spaces
\begin{alignat*}{2}
V (K) &\subset \bigl[ H ^2 (K) \bigr] ^n , \qquad &V &\coloneqq \prod _{ K \in \mathcal{T} _h } V (K) ,\\
\Sigma (K) &\subset \bigl[ H ^1 (K) \bigr] ^{ m n } , \qquad &\Sigma &\coloneqq \prod _{ K \in \mathcal{T} _h } \Sigma (K) ,
\intertext{along with spaces of approximate boundary traces on
$ \mathcal{E} _h $,}
  \widehat{ V } &\subset \bigl[ L ^2 ( \mathcal{E} _h ) \bigr] ^n , \qquad & \widehat{ V } _0 &\coloneqq \bigl\{ \widehat{ v } \in \widehat{ V } : \widehat{ v } \rvert _{ \partial \Omega } = 0 \bigr\} .
\end{alignat*}
The de~Donder--Weyl equations \eqref{eqn:ddw_time} are then
approximated by the weak problem: Find
$ \bigl( u (t) , \sigma (t) , p (t) , \widehat{ u } (t) \bigr) \in V
\times \Sigma \times V ^\ast \times \widehat{ V } $ satisfying
\begin{subequations}
  \label{eqn:ddw_hdg}
  \begin{alignat}{2}
    \int _K \dot{ u } ^i r _i \,\mathrm{d}^m x &= \int _K \frac{ \partial H }{ \partial p _i } r _i \,\mathrm{d}^m x , \quad &\forall r &\in V ^\ast (K) , \label{eqn:ddw_hdg_r}\\
    0 &= \int _K  \biggl( u ^i \partial _\mu \tau _i ^\mu + \frac{ \partial H }{ \partial \sigma _i ^\mu } \tau _i ^\mu \biggr) \,\mathrm{d}^m x - \int _{ \partial K } \widehat{ u } ^i \tau _i ^\mu \,\mathrm{d}^{m-1} x_\mu , \quad &\forall \tau &\in \Sigma (K) , \label{eqn:ddw_hdg_tau}\\
    \int _K \dot{ p } _i v ^i  \,\mathrm{d}^m x &= \int _K \biggl( \sigma _i ^\mu \partial _\mu v ^i - \frac{ \partial H }{ \partial u ^i } v ^i \biggr) \,\mathrm{d}^m x - \int _{ \partial K } \widehat{ \sigma } _i ^\mu v ^i \,\mathrm{d}^{m-1} x_\mu , \quad & \forall v & \in V (K) \label{eqn:ddw_hdg_v},
    \intertext{for all $ K \in \mathcal{T} _h $, together with the \emph{conservativity condition}}
    0 &= \sum _{ K \in \mathcal{T} _h } \int _{ \partial K } \widehat{ \sigma } _i ^\mu \widehat{ v } ^i \,\mathrm{d}^{m-1} x_\mu , \quad & \forall \widehat{ v } &\in \widehat{ V } _0 \label{eqn:ddw_hdg_vhat}.
  \end{alignat}
\end{subequations}
Here, $ \widehat{ \sigma } $ is determined by
$ u , \sigma, \widehat{ u } $ through a specified numerical flux
function; see \citet*{CoGoLa2009,McSt2020} for further details. The
equations \eqref{eqn:ddw_hdg_r}--\eqref{eqn:ddw_hdg_v} are derived by
multiplying \eqref{eqn:ddw_time} by test functions, integrating by
parts over $K$, and replacing the boundary traces of $u$ and $\sigma$
by the approximate traces $ \widehat{ u } $ and
$ \widehat{ \sigma } $. Under appropriate nondegeneracy assumptions,
the equations \eqref{eqn:ddw_hdg_r} and \eqref{eqn:ddw_hdg_v} define
the dynamics of $ y _h = ( u , p ) $ on
$ Y _h \coloneqq V \times V ^\ast $, where $ \sigma $,
$ \widehat{ u } $, and $ \widehat{ \sigma } $ are implicit functions
of $ y _h $.

We may then consider variations of solutions to \eqref{eqn:ddw_hdg},
along with a corresponding semidiscrete multisymplectic conservation
law in the integral form \eqref{eqn:mscl_int}. The following is a
straightforward generalization of Lemma~2 in \citep{McSt2020}.

\begin{theorem}
  \label{thm:multisymplectic}
  If \eqref{eqn:ddw_hdg_r}--\eqref{eqn:ddw_hdg_v} hold on
  $ K \in \mathcal{T} _h $, then
  \begin{equation*}
    \int _K \partial _t ( \mathrm{d} u ^i \wedge \mathrm{d} p _i ) \,\mathrm{d}^m x = - \int _{ \partial K } ( \mathrm{d} \widehat{ u } ^i \wedge \mathrm{d} \widehat{ \sigma } _i ^\mu ) \,\mathrm{d}^{m-1} x_\mu + \int _{ \partial K } \bigl( \mathrm{d} ( \widehat{ u } ^i - u ^i ) \wedge \mathrm{d} ( \widehat{ \sigma } _i ^\mu - \sigma _i ^\mu ) \bigr) \,\mathrm{d}^{m-1} x_\mu .
  \end{equation*}
  Consequently, the semidiscrete multisymplectic conservation law
  \begin{equation}
    \label{eqn:mscl_hdg}
    \int _K \partial _t ( \mathrm{d} u ^i \wedge \mathrm{d} p _i ) \,\mathrm{d}^m x = - \int _{ \partial K } ( \mathrm{d} \widehat{ u } ^i \wedge \mathrm{d} \widehat{ \sigma } _i ^\mu ) \,\mathrm{d}^{m-1} x_\mu
  \end{equation}
  holds on $ K \in \mathcal{T} _h $ if and only if
  $ \int _{ \partial K } \bigl( \mathrm{d} ( \widehat{ u } ^i - u ^i )
  \wedge \mathrm{d} ( \widehat{ \sigma } _i ^\mu - \sigma _i ^\mu )
  \bigr) \,\mathrm{d}^{m-1} x_\mu = 0 $.
\end{theorem}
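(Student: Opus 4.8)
The plan is to adapt the proof of Lemma~2 in \citet{McSt2020} to the time-dependent setting, working with variations of solutions. Fix a solution $ (u, \sigma, p, \widehat u) $ of \eqref{eqn:ddw_hdg} on $K$ and let $ (v, r, \tau, \widehat v) $ and $ (v ^\prime, r ^\prime, \tau ^\prime, \widehat v ^\prime) $ be two first variations, with $ \widehat \tau $ and $ \widehat \tau ^\prime $ the induced variations of $ \widehat \sigma $. Differentiating \eqref{eqn:ddw_hdg_r}--\eqref{eqn:ddw_hdg_v} along each variation gives the corresponding linearized weak equations, in which the test functions $ r \in V ^\ast (K) $, $ \tau \in \Sigma (K) $, $ v \in V (K) $ are still arbitrary. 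The essential point---and the reason for the matching between solution and test spaces---is that the momentum variation $ r ^\prime $ lies in $ V ^\ast (K) $, the stress variation $ \tau ^\prime $ in $ \Sigma (K) $, and the field variation $ v ^\prime $ in $ V (K) $, so each may legitimately be used as a test function.

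First I would form the combination of instances of the linearized equations: \eqref{eqn:ddw_hdg_r} for the first variation tested against $ r ^\prime $; plus \eqref{eqn:ddw_hdg_v} for the second variation tested against $ v $; minus \eqref{eqn:ddw_hdg_r} for the second variation tested against $ r $; minus \eqref{eqn:ddw_hdg_v} for the first variation tested against $ v ^\prime $. By the product rule the left-hand sides combine to $ \int _K \partial _t ( v ^i r _i ^\prime - v ^{\prime i} r _i ) \,\mathrm{d} ^m x = \int _K \partial _t ( \mathrm{d} u ^i \wedge \mathrm{d} p _i ) \,\mathrm{d} ^m x $. On the right-hand sides, the $ \partial ^2 H / \partial u \, \partial u $, $ \partial ^2 H / \partial p \, \partial p $, and mixed $ \partial ^2 H / \partial u \, \partial p $ terms cancel in antisymmetric pairs by symmetry of the Hessian, but the mixed $ \partial ^2 H / \partial u \, \partial \sigma $ and $ \partial ^2 H / \partial p \, \partial \sigma $ terms do not; to cancel these I would add in \eqref{eqn:ddw_hdg_tau} for the first variation tested against $ \tau ^\prime $ and subtract \eqref{eqn:ddw_hdg_tau} for the second variation tested against $ \tau $, both of which have vanishing left-hand sides. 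After this, all Hessian contributions cancel and what remains are the $ \partial _\mu $ terms $ \int _K \bigl( \tau _i ^{\prime \mu} \partial _\mu v ^i + v ^i \partial _\mu \tau _i ^{\prime \mu} - \tau _i ^\mu \partial _\mu v ^{\prime i} - v ^{\prime i} \partial _\mu \tau _i ^\mu \bigr) \,\mathrm{d} ^m x $ together with boundary integrals over $ \partial K $ involving $ \widehat v, \widehat v ^\prime, \widehat \tau, \widehat \tau ^\prime $ and the traces of $ \tau, \tau ^\prime $.

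Next I would apply the divergence theorem on $K$ to rewrite $ \int _K \partial _\mu ( v ^i \tau _i ^{\prime \mu} ) \,\mathrm{d} ^m x = \int _{ \partial K } v ^i \tau _i ^{\prime \mu} \,\mathrm{d} ^{m-1} x _\mu $, and similarly with primes exchanged, so that the entire right-hand side becomes a single integral over $ \partial K $. A short algebraic rearrangement of its integrand---adding and subtracting $ \widehat v ^i \widehat \tau _i ^{\prime \mu} $ and $ \widehat v ^{\prime i} \widehat \tau _i ^\mu $---shows it equals $ - ( \widehat v ^i \widehat \tau _i ^{\prime \mu} - \widehat v ^{\prime i} \widehat \tau _i ^\mu ) + ( \widehat v ^i - v ^i )( \widehat \tau _i ^{\prime \mu} - \tau _i ^{\prime \mu} ) - ( \widehat v ^{\prime i} - v ^{\prime i} )( \widehat \tau _i ^\mu - \tau _i ^\mu ) $, which is exactly $ - ( \mathrm{d} \widehat u ^i \wedge \mathrm{d} \widehat \sigma _i ^\mu ) + \mathrm{d} ( \widehat u ^i - u ^i ) \wedge \mathrm{d} ( \widehat \sigma _i ^\mu - \sigma _i ^\mu ) $ evaluated on the pair of variations. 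Since the variations are arbitrary, this proves the first displayed identity. The ``consequently'' statement is then immediate: \eqref{eqn:mscl_hdg} is obtained from the first identity precisely by dropping $ \int _{ \partial K } \bigl( \mathrm{d} ( \widehat u ^i - u ^i ) \wedge \mathrm{d} ( \widehat \sigma _i ^\mu - \sigma _i ^\mu ) \bigr) \,\mathrm{d} ^{m-1} x _\mu $, so the two coincide if and only if that term vanishes.

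I expect the main obstacle to be the bookkeeping in the second step: confirming that \emph{every} Hessian term cancels under antisymmetrization---in particular that the $ \partial ^2 H / \partial \sigma \, \partial u $ and $ \partial ^2 H / \partial \sigma \, \partial p $ terms coming from the linearized \eqref{eqn:ddw_hdg_tau} exactly match those left over from the linearized \eqref{eqn:ddw_hdg_r} and \eqref{eqn:ddw_hdg_v}---and that the surviving $ \partial _\mu $ terms pair up into exact divergences so that the divergence theorem applies cleanly. Keeping the signs consistent throughout is the only real difficulty; the final identification of the boundary integrand is then a routine algebraic check.
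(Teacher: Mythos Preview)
Your proposal is correct and follows essentially the same approach as the paper. The paper compresses the computation by writing \eqref{eqn:ddw_hdg_r}--\eqref{eqn:ddw_hdg_v} with $\mathrm{d}p_i$, $\mathrm{d}\sigma_i^\mu$, $\mathrm{d}u^i$ in place of the test functions, forming the combination (first $+$ second $-$ third), and then taking one more exterior derivative; the identity $\mathrm{d}\mathrm{d}H = 0$ kills all Hessian contributions at once, after which the divergence theorem and the same boundary rearrangement you describe finish the job. Your explicit antisymmetrization over the two variations and term-by-term Hessian cancellation is exactly what that exterior-derivative shorthand encodes.
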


\begin{proof}
  We may rewrite \eqref{eqn:ddw_hdg_r}--\eqref{eqn:ddw_hdg_v} as
  \begin{align*}
    \int _K \dot{ u } ^i \,\mathrm{d} p _i \,\mathrm{d}^m x &= \int _K \frac{ \partial H }{ \partial p _i } \,\mathrm{d} p _i \,\mathrm{d}^m x ,\\
    0 &= \int _K  \biggl( u ^i \,\mathrm{d} ( \partial _\mu \sigma _i ^\mu) + \frac{ \partial H }{ \partial \sigma _i ^\mu } \,\mathrm{d} \sigma _i ^\mu \biggr) \,\mathrm{d}^m x - \int _{ \partial K } \widehat{ u } ^i \,\mathrm{d} \sigma _i ^\mu \,\mathrm{d}^{m-1} x_\mu , \\
    \int _K \dot{ p } _i \,\mathrm{d} u ^i  \,\mathrm{d}^m x &= \int _K \biggl( \sigma _i ^\mu \mathrm{d} (\partial _\mu u ^i) - \frac{ \partial H }{ \partial u ^i } \,\mathrm{d} u ^i \biggr) \,\mathrm{d}^m x - \int _{ \partial K } \widehat{ \sigma } _i ^\mu \,\mathrm{d} u ^i \,\mathrm{d}^{m-1} x_\mu .
  \end{align*}
  Adding the first two equations, subtracting the third, and taking
  the exterior derivative on both sides, we get
  \begin{multline*}
    \int _K \partial _t ( \mathrm{d} u ^i \wedge \mathrm{d} p _i ) \,\mathrm{d}^m x \\
    \begin{aligned}
      &= \int _K \bigl( \partial _\mu ( \mathrm{d} u ^i \wedge \mathrm{d} \sigma _i ^\mu ) + \mathrm{d} \mathrm{d} H \bigr) \,\mathrm{d}^m x - \int _{ \partial K } ( \mathrm{d} \widehat{ u } ^i \wedge \mathrm{d} \sigma _i ^\mu + \mathrm{d} u ^i \wedge \mathrm{d} \widehat{ \sigma } _i ^\mu ) \,\mathrm{d}^{m-1} x_\mu \\
      &= \int _{ \partial K } ( \mathrm{d} u ^i \wedge \mathrm{d} \sigma _i ^\mu - \mathrm{d} \widehat{ u } ^i \wedge \mathrm{d} \sigma _i ^\mu - \mathrm{d} u ^i \wedge \mathrm{d} \widehat{ \sigma } _i ^\mu ) \,\mathrm{d}^{m-1} x_\mu \\
      &= - \int _{ \partial K } ( \mathrm{d} \widehat{ u } ^i \wedge \mathrm{d} \widehat{ \sigma } _i ^\mu ) \,\mathrm{d}^{m-1} x_\mu + \int _{ \partial K } \bigl( \mathrm{d} ( \widehat{ u } ^i - u ^i ) \wedge \mathrm{d} ( \widehat{ \sigma } _i ^\mu - \sigma _i ^\mu ) \bigr) \,\mathrm{d}^{m-1} x_\mu ,
    \end{aligned}
  \end{multline*}
  where the second equality uses $ \mathrm{d} \mathrm{d} H = 0 $ and
  the divergence theorem.
\end{proof}

In Section 4 of \citep{McSt2020}, it is proved that several families
of hybrid finite element methods, including hybridized mixed methods
(RT-H and BDM-H), nonconforming methods (NC-H), discontinuous Galerkin
methods (LDG-H and IP-H), and continuous Galerkin methods (CG-H)
satisfy the condition
$ \int _{ \partial K } \bigl( \mathrm{d} ( \widehat{ u } ^i - u ^i )
\wedge \mathrm{d} ( \widehat{ \sigma } _i ^\mu - \sigma _i ^\mu )
\bigr) \,\mathrm{d}^{m-1} x_\mu = 0 $ of
\cref{thm:multisymplectic}. Therefore, when these methods are applied
to \eqref{eqn:ddw_hdg}, they satisfy the semidiscrete multisymplectic
conservation law \eqref{eqn:mscl_hdg} on each
$ K \in \mathcal{T} _h $.

If the numerical flux satisfies the so-called \emph{strong
  conservativity condition}
$ \llbracket \widehat{ \sigma } \rrbracket = 0 $, which is stronger
than \eqref{eqn:ddw_hdg_vhat}, then the multisymplectic conservation
law \eqref{eqn:mscl_hdg} may also be strengthened so that it holds for
arbitrary unions of simplices. This holds for all of the methods
mentioned in the previous paragraph \emph{except} CG-H. The following
is a straightforward generalization of Theorem~3 in \citep{McSt2020}.

\begin{theorem}
  If a strongly conservative method satisfies \eqref{eqn:mscl_hdg},
  then for all $ \mathcal{K} \subset \mathcal{T} _h $,
  \begin{equation*}
    \int _{ \bigcup \mathcal{K} } \partial _t ( \mathrm{d} u ^i \wedge \mathrm{d} p _i ) \,\mathrm{d}^m x = - \int _{ \partial ( \overline{ \bigcup K }) } ( \mathrm{d} \widehat{ u } ^i \wedge \mathrm{d} \widehat{ \sigma } _i ^\mu ) \,\mathrm{d}^{m-1} x_\mu .
  \end{equation*}
\end{theorem}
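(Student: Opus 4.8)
The plan is to sum the per-simplex conservation law \eqref{eqn:mscl_hdg} over all $ K \in \mathcal{K} $ and show that the contributions from interior facets cancel, leaving only the flux through $ \partial(\overline{\bigcup\mathcal{K}}) $. Since the simplices in $ \mathcal{K} $ have pairwise disjoint interiors with union $ \bigcup\mathcal{K} $, additivity of the volume integral gives
\[
  \int_{\bigcup\mathcal{K}} \partial_t(\mathrm{d}u^i \wedge \mathrm{d}p_i)\,\mathrm{d}^m x = -\sum_{K \in \mathcal{K}} \int_{\partial K}(\mathrm{d}\widehat{u}^i \wedge \mathrm{d}\widehat{\sigma}_i^\mu)\,\mathrm{d}^{m-1}x_\mu ,
\]
so everything reduces to evaluating the sum of boundary integrals on the right.

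The facets occurring in $ \bigcup_{K \in \mathcal{K}}\partial K $ fall into two classes. A facet lying on $ \partial(\overline{\bigcup\mathcal{K}}) $ --- whether on $ \partial\Omega $ or shared with a simplex outside $ \mathcal{K} $ --- occurs in exactly one $ \partial K $, and these assemble precisely to the right-hand side of the claimed identity. An \emph{interior} facet $e$ shared by two simplices $ K_+, K_- \in \mathcal{K} $ occurs in exactly two, namely $ \partial K_+ $ and $ \partial K_- $, with opposite outer normals along $e$. On $e$ the trace $ \widehat{u} $ is single-valued because it is a function on the skeleton $ \mathcal{E}_h $, and the strong conservativity hypothesis $ \llbracket\widehat{\sigma}\rrbracket = 0 $ --- which is stronger than \eqref{eqn:ddw_hdg_vhat} --- forces the normal traces of $ \widehat{\sigma} $ to agree from the two sides; hence the integrand $ \mathrm{d}\widehat{u}^i \wedge \mathrm{d}\widehat{\sigma}_i^\mu $, paired with $ \mathrm{d}^{m-1}x_\mu $, is the same from $ K_+ $ and from $ K_- $ except for the sign coming from the opposite orientations. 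The two contributions therefore cancel pointwise in $ x \in e $, and summing over all interior facets eliminates them entirely, which yields the theorem. This is the exact analogue of Theorem~3 in \citep{McSt2020}; the only new ingredient, the $ \partial_t $ term on the left, is absorbed for free in the additivity step.

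The main point requiring care is the bookkeeping of conventions: one must keep straight that $ \mathrm{d} $ here is the exterior derivative on the space of variations rather than on spacetime, so that $ \mathrm{d}\widehat{u}^i \wedge \mathrm{d}\widehat{\sigma}_i^\mu $ is a $2$-form on pairs of variations and the cancellation on each interior facet is pointwise in the spatial variable; and one must track the orientation convention in $ \int_{\partial K}(\cdot)\,\mathrm{d}^{m-1}x_\mu $ so that opposite outer normals genuinely contribute opposite signs. Note that no connectivity or regularity hypothesis on $ \mathcal{K} $ is needed, since the cancellation is purely local to each facet; the identity holds for every $ \mathcal{K} \subset \mathcal{T}_h $.
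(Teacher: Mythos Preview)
Your proof is correct and follows exactly the same approach as the paper: sum \eqref{eqn:mscl_hdg} over $K \in \mathcal{K}$ and use $\llbracket \widehat{\sigma} \rrbracket = 0$ (together with the single-valuedness of $\widehat{u}$ on the skeleton) to cancel the contributions of internal facets. The paper's proof is a single sentence to this effect; your additional remarks on orientation and the nature of the exterior derivative are accurate elaborations but not essential to the argument.
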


\begin{proof}
  Sum \eqref{eqn:mscl_hdg} over $ K \in \mathcal{K} $, using
  $ \llbracket \widehat{ \sigma } \rrbracket = 0 $ to cancel the
  contributions of internal facets.
\end{proof}

\begin{remark}
  In the situation considered in \cref{rmk:mscl_symplectic}, taking
  $ \mathcal{K} = \mathcal{T} _h $ implies conservation of the
  symplectic form
  $ \int _\Omega \mathrm{d} u ^i \wedge \mathrm{d} p _i \,\mathrm{d}^m
  x $ on $ Y _h $. This generalizes a result of
  \citet{SaCiNgPeCo2017}, which states that semidiscretization of the
  acoustic wave equation by LDG-H is symplectic.
\end{remark}

\section{Generalization to additive and partitioned methods}
\label{sec:additive_partitioned}

In the preceding sections, we have developed a theory of functional
equivariance for a class of numerical integrators, including B-series
methods, and applied it to local conservation laws for PDEs. This
section extends the functional equivariance theory from \cref{sec:fe}
to two larger classes of numerical integrators: additive methods and
partitioned methods. It follows that, when these methods are applied
to PDEs satisfying local conservation laws, the results of
\cref{sec:conservation} and \cref{sec:multisymplectic} may also be
extended to these classes of methods.

\subsection{Additive methods}

We now consider integrators applied to a vector field
$ f \in \mathfrak{X} (Y) $ after it has been additively decomposed as
$ f = f ^{ [1] } + \cdots + f ^{ [N] } $.  Specifically, we have in
mind additive Runge--Kutta and NB-series methods
(cf.~\citet*{ArMuSa1997}), as well as splitting and composition
methods (cf.~\citet{McQu2002}).

Denote the application of a method $\Phi$ to a decomposed vector field
$ f = f ^{ [1] } + \cdots + f ^{ [N] } $ by
$ \Phi _{ f ^{ [1] } , \ldots , f ^{ [N] } } $. By an additive
numerical integrator, we mean the entire collection of maps
$ \Phi = \bigl\{ \Phi _{ f ^{ [1] } , \ldots , f ^{ [N] } } : f ^{ [1]
} , \ldots , f ^{ [N] } \in \mathfrak{X} (Y) ,\ Y \text{ a Banach
  space} \bigr\} $. We begin by extending the definitions of affine
equivariance and functional equivariance to such methods.

\begin{definition}
  An additive numerical integrator $\Phi$ is \emph{$N$-affine
    equivariant} if
  $ A \circ \Phi _{ f ^{ [1] } , \ldots, f ^{ [N] } } = \Phi _{ g ^{
      [1] } , \ldots, g ^{ [N ]} } \circ A $ whenever
  $ f ^{ [\nu] } \in \mathfrak{X} (Y) $ and
  $ g ^{ [\nu] } \in \mathfrak{X} (U) $ are $A$-related for all
  $ \nu = 1 , \ldots, N $, all affine maps
  $A \colon Y \rightarrow U $, and all Banach spaces $Y$ and $U$.
\end{definition}

\begin{definition}
  Given a G\^ateaux differentiable map $ F \colon Y \rightarrow Z $
  and $ f ^{ [1] } , \ldots, f ^{ [N] } \in \mathfrak{X} (Y) $, define
  $ g ^{ [1] }, \ldots, g ^{ [N] } \in \mathfrak{X} ( Y \times Z ) $
  by
  $ g ^{ [\nu] } ( y, z ) = \bigl( f ^{[\nu]} (y) , F ^\prime (y) f
  ^{[\nu]} (y) \bigr) $ for $ \nu = 1 , \ldots, N $. We say that an
  additive numerical integrator $\Phi$ is \emph{$F$-functionally
    equivariant} if
  $ (\mathrm{id}, F ) \circ \Phi _{ f ^{ [1] } , \ldots, f ^{ [N] } }
  = \Phi _{ g ^{ [1] } , \ldots, g ^{ [N] } } \circ ( \mathrm{id}, F )
  $ for all $ f ^{[1]} , \ldots, f^{[N]} \in \mathfrak{X} (Y) $ and
  \emph{$ \mathcal{F} $-functionally equivariant} if this holds for
  all $ F \in \mathcal{F} ( Y, Z ) $ and all Banach spaces $Y$ and $Z$.
\end{definition}

\begin{proposition}
  \label{prop:N-afe}
  Every $N$-affine equivariant method is affine functionally equivariant.
\end{proposition}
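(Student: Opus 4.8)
The plan is to run the same argument used for \cref{prop:afe}, now applied componentwise to each of the $N$ vector fields in the additive decomposition. Let $ F \colon Y \rightarrow Z $ be an affine map. Then $ (\mathrm{id}, F) \colon Y \rightarrow Y \times Z $ is again affine, with G\^ateaux derivative $ (\mathrm{id}, F) ^\prime (y) = (\mathrm{id}, F ^\prime (y)) $.

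Next I would verify that, for each $ \nu = 1, \ldots, N $, the vector fields $ f ^{[\nu]} \in \mathfrak{X} (Y) $ and $ g ^{[\nu]} \in \mathfrak{X} ( Y \times Z ) $ from the definition of additive $F$-functional equivariance are $ (\mathrm{id}, F) $-related in the sense of \cref{def:ae}. Indeed,
\[
  (\mathrm{id}, F) ^\prime (y) \, f ^{[\nu]} (y) = \bigl( f ^{[\nu]} (y), F ^\prime (y) f ^{[\nu]} (y) \bigr) = g ^{[\nu]} \bigl( y, F (y) \bigr) = \bigl( g ^{[\nu]} \circ (\mathrm{id}, F) \bigr)(y),
\]
which is exactly the $ (\mathrm{id}, F) $-relatedness condition. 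Since this holds for every $\nu$ simultaneously \emph{with the same affine map} $ A = (\mathrm{id}, F) $, the hypothesis of $N$-affine equivariance is satisfied.

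Finally, I would invoke $N$-affine equivariance to get $ (\mathrm{id}, F) \circ \Phi _{ f ^{[1]} , \ldots, f ^{[N]} } = \Phi _{ g ^{[1]} , \ldots, g ^{[N]} } \circ (\mathrm{id}, F) $, which is precisely the statement that $\Phi$ is $F$-functionally equivariant; letting $F$ range over all affine maps between Banach spaces then yields affine functional equivariance. There is no substantial obstacle here: the only point meriting attention is that $N$-affine equivariance requires all $N$ pairs to be $A$-related at once, but this is immediate because each pair is $ (\mathrm{id}, F) $-related by the identical one-line computation using a single map $A$. As with \cref{rmk:aromatic}, one should not expect the converse to hold.
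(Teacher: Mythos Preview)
Your proof is correct and follows exactly the paper's approach: show that $(\mathrm{id},F)$ is affine whenever $F$ is, verify that each pair $f^{[\nu]}$, $g^{[\nu]}$ is $(\mathrm{id},F)$-related, and then apply $N$-affine equivariance. The paper's proof simply says it is ``essentially identical to that for \cref{prop:afe},'' and your write-up just makes those details explicit.
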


\begin{proof}
  The proof is essentially identical to that for \cref{prop:afe}. If
  $F$ is affine, then so is $ ( \mathrm{id}, F ) $, and the vector
  fields $ f ^{ [\nu] } $ and $ g ^{ [\nu] } $ are
  $ ( \mathrm{id}, F ) $-related for all $ \nu = 1 , \ldots, N $.
\end{proof}

\begin{example}[additive Runge--Kutta methods]
  \label{ex:ark}
  An $s$-stage additive Runge--Kutta (ARK) method has the form
  \begin{align*}
    Y _i &= y _0 + \Delta t \sum _{ \nu = 1 } ^N \sum _{ j = 1 } ^s a _{ i j } ^{[\nu] }f ^{[\nu]} ( Y _j ) , \qquad  i = 1, \ldots, s ,\\
    y _1 &= y _0 + \Delta t \sum _{ \nu = 1 } ^N \sum _{ i = 1 } ^s b _i ^{[\nu]} f ^{[\nu]} ( Y _i ) ,
  \end{align*}
  and $F$-functional equivariance is the condition
  \begin{equation*}
    F ( y _1 ) = F ( y _0 ) + \Delta t \sum _{ \nu = 1 } ^N \sum _{ i = 1 } ^s b _i ^{[\nu]} F ^\prime ( Y _i ) f ^{[\nu]} ( Y _i ) .
  \end{equation*}
  If $F$ is an invariant, then we have
  $ F ^\prime ( Y _i ) f ( Y _i ) = 0 $ but generally
  $ F ^\prime ( Y _i ) f ^{ {[\nu]} } ( Y _i ) \neq 0 $ for $ N > 1 $,
  so the sum on the right-hand side need not vanish.  However, if
  $ b _i ^{[\nu]} = b _i $ is independent of $\nu$, then it does
  vanish, and we obtain $ F ( y _1 ) = F ( y _0 ) $ as in
  \cref{ex:rk}. This illustrates that an ARK method may be
  functionally equivariant but not invariant preserving (even for
  affine maps) unless some additional condition is satisfied.
\end{example}

\begin{proposition}
  \label{prop:ark}
  Additive Runge--Kutta methods are $N$-affine
  equivariant. Furthermore, an ARK method preserves affine invariants
  if $ b _i ^{[\nu]} = b _i $ is independent of $\nu$.
\end{proposition}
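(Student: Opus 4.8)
The plan is to adapt the affine-equivariance computation for ordinary Runge--Kutta methods, carrying the extra index $\nu$ through the decomposition, and then to read the preservation of affine invariants directly off the resulting update formula in \cref{ex:ark}.

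For $N$-affine equivariance, I would fix an affine map $A \colon Y \to U$ with linear part $A'$ and vector fields $f^{[\nu]} \in \mathfrak{X}(Y)$, $g^{[\nu]} \in \mathfrak{X}(U)$ that are $A$-related, i.e. $A' \circ f^{[\nu]} = g^{[\nu]} \circ A$, for each $\nu = 1, \ldots, N$. Let $(Y_1, \ldots, Y_s, y_1)$ be the stages and update produced by the ARK method applied to $(f^{[1]}, \ldots, f^{[N]})$ from $y_0$. Applying $A$ to the stage equations and using that $A$ is affine (so $A(y_0 + v) = A(y_0) + A'v$), I get
\[
  A(Y_i) = A(y_0) + \Delta t \sum_{\nu=1}^N \sum_{j=1}^s a_{ij}^{[\nu]} A'\!\bigl(f^{[\nu]}(Y_j)\bigr) = A(y_0) + \Delta t \sum_{\nu=1}^N \sum_{j=1}^s a_{ij}^{[\nu]} g^{[\nu]}\!\bigl(A(Y_j)\bigr),
\]
which says exactly that $(A(Y_1), \ldots, A(Y_s))$ solves the stage equations for $(g^{[1]}, \ldots, g^{[N]})$ from $A(y_0)$. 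The identical manipulation applied to the update equation gives $A(y_1) = A(y_0) + \Delta t \sum_\nu \sum_i b_i^{[\nu]} g^{[\nu]}(A(Y_i))$, i.e. $A(y_1) = \Phi_{g^{[1]}, \ldots, g^{[N]}}(A(y_0))$; hence $A \circ \Phi_{f^{[1]},\ldots,f^{[N]}} = \Phi_{g^{[1]},\ldots,g^{[N]}} \circ A$, as required.

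For the second claim, suppose $F \colon Y \to Z$ is an affine invariant of $f = f^{[1]} + \cdots + f^{[N]}$, so $F'(y) f(y) = 0$ for all $y$, with $F'$ the constant linear part of $F$. Applying $F$ to the ARK update and again using affineness of $F$ gives, exactly as in \cref{ex:ark},
\[
  F(y_1) = F(y_0) + \Delta t \sum_{\nu=1}^N \sum_{i=1}^s b_i^{[\nu]} F'(Y_i) f^{[\nu]}(Y_i).
\]
When $b_i^{[\nu]} = b_i$ is independent of $\nu$, the double sum collapses to $\sum_i b_i F'(Y_i) \sum_\nu f^{[\nu]}(Y_i) = \sum_i b_i F'(Y_i) f(Y_i) = 0$, whence $F(y_1) = F(y_0)$.

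Since this is just the affine-equivariance argument for Runge--Kutta methods with one more summation index carried along, I expect no genuine obstacle. The only points needing care are the bookkeeping distinction between the affine map $A$ and its linear part $A'$ (it is $A'$, not $A$, that is pushed through the increments in the stage and update equations), and the usual partial-function convention for implicit methods: the ARK stage equations need not have a unique solution globally, so one restricts to $\Delta t$ small enough that the implicit function theorem pins down the consistent branch, which is then the solution $(A(Y_i))$ produced by the computation above.
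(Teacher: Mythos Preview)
Your proof is correct and follows essentially the same approach as the paper: the $N$-affine equivariance computation is identical, and for the second claim the paper appeals to \cref{prop:N-afe} and \cref{ex:ark}, which together amount to exactly the direct computation you wrote out.
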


\begin{proof}
  Suppose $ f ^{ [\nu] } $ and $ g ^{ [\nu] } $ are $A$-related for
  $ \nu = 1 , \ldots, N $. Then
  \begin{align*}
    A ( Y _i ) 
    &= A ( y _0 ) + A ^\prime ( Y _i - y _0 ) \\
    &= A ( y _0 ) + \Delta t \sum _{ \nu = 1 } ^N \sum _{ j = 1 } ^s a _{ i j } ^{[\nu] } ( A ^\prime \circ f ^{[\nu]} ) ( Y _j ) \\
    &= A ( y _0 ) + \Delta t \sum _{ \nu = 1 } ^N \sum _{ j = 1 } ^s a _{ i j } ^{[\nu] } g ^{[\nu]} \bigl( A ( Y _j ) \bigr) ,
  \end{align*}
  for $ i = 1 , \ldots, s $, and similarly,
  \begin{equation*}
    A ( y _1 ) = A ( y _0 ) + \Delta t \sum _{ \nu = 1 } ^N \sum _{ i = 1 } ^s b _i ^{[\nu]} g ^{[\nu]} \bigl( A ( Y _i ) \bigr) .
  \end{equation*} 
  This shows that
  $ A ( y _1 ) = ( A \circ \Phi _f ) ( y _0 ) = ( \Phi _g \circ A ) (
  y _0 ) $, so $ \Phi $ is $N$-affine equivariant. Finally, if
  $ b _i ^{[\nu]} = b _i $ is independent of $\nu$, then
  \cref{prop:N-afe} and \cref{ex:ark} show that $\Phi$ preserves
  affine invariants.
\end{proof}

\begin{remark}
  It is straightforward to show that, in fact, all NB-series methods
  are $N$-affine equivariant. (This includes, e.g., generalized
  additive Runge--Kutta methods, whose symplecticity conditions were
  recently investigated by \citet*{GuSaZa2021}.) The proof is,
  essentially, to repeatedly differentiate the $A$-relatedness
  condition $ A ^\prime \circ f ^{ [\nu] } = g ^{ [\nu] } \circ A $,
  obtaining a relation between the elementary
  differentials.
\end{remark}

\begin{theorem}
  \label{thm:n-fe}
  Let $\mathcal{F}$ satisfy \cref{assumption:affine}. An additive
  numerical integrator $\Phi$ preserves $\mathcal{F}$-invariants if
  and only if it is $\mathcal{F}$-functionally equivariant and
  preserves affine invariants.
\end{theorem}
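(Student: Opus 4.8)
The plan is to adapt the proof of \cref{thm:fe}, inserting the affine-invariants hypothesis precisely where the additive structure obstructs the original argument.

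For the forward direction, suppose $\Phi$ preserves $\mathcal{F}$-invariants. First, \cref{assumption:affine} implies that every affine map lies in $\mathcal{F}$: given an affine $B \colon Y \to Z$, the factorization $B = B \circ \mathrm{id} _Y \circ \mathrm{id} _Y$ exhibits $B$ as an affine-composition of $\mathrm{id} _Y \in \mathcal{F}(Y,Y)$, so $B \in \mathcal{F}(Y,Z)$. Hence preservation of $\mathcal{F}$-invariants in particular implies preservation of affine invariants. Next, to obtain $\mathcal{F}$-functional equivariance I would follow the $(\Rightarrow)$ half of \cref{thm:fe} essentially verbatim: given $F \in \mathcal{F}(Y,Z)$ and $f ^{[1]}, \dots, f ^{[N]} \in \mathfrak{X}(Y)$, the map $G(y,z) = F(y) - z$ lies in $\mathcal{F}(Y \times Z, Z)$ by \cref{assumption:affine}, and $G'(y,z) g ^{[\nu]}(y,z) = F'(y) f ^{[\nu]}(y) - F'(y) f ^{[\nu]}(y) = 0$ for every $\nu$, so $G$ is an invariant of the augmented decomposed vector field $(g ^{[1]}, \dots, g ^{[N]})$. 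Preservation of $\mathcal{F}$-invariants then forces $F(y _1) - z _1 = F(y _0) - z _0$, and specializing to $z _0 = F(y _0)$ yields $z _1 = F(y _1)$.

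For the converse, suppose $\Phi$ is $\mathcal{F}$-functionally equivariant and preserves affine invariants, and let $F \in \mathcal{F}(Y,Z)$ be an invariant of $f = f ^{[1]} + \dots + f ^{[N]}$. Form the augmented decomposed vector field $g ^{[\nu]}(y,z) = \bigl( f ^{[\nu]}(y),\, F'(y) f ^{[\nu]}(y) \bigr)$. The crucial point is that, unlike in the non-additive setting, the individual $g ^{[\nu]}$ need not have vanishing $z$-component; only the sum $g = (f, F'f)$ does, because $F$ is an invariant of $f$. Thus $g = (f, 0)$, so the affine map $G(y,z) = z$ is an invariant of the total vector field $g$, and the affine-invariants hypothesis gives $z _1 = z _0$ for $\Phi _{g ^{[1]}, \dots, g ^{[N]}}$. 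On the other hand, $\mathcal{F}$-functional equivariance applied with $z _0 = F(y _0)$ gives $z _1 = F(y _1)$. Comparing the two, $F(y _1) = F(y _0)$, which is what we want.

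The main obstacle is exactly this mismatch in the converse direction: in \cref{thm:fe} one exploits that a constant functional has the same augmented vector field $(f,0)$ as an invariant $F$, but for an additively split system the per-component augmented fields are not $(f ^{[\nu]}, 0)$, so functional equivariance alone cannot conclude $z _1 = z _0$. Recognizing that $z$ is nonetheless an affine invariant of the \emph{summed} augmented field---so that ``preserves affine invariants'' is precisely the missing ingredient, as already foreshadowed by \cref{ex:ark}---is the key idea; the remainder is bookkeeping with \cref{assumption:affine}.
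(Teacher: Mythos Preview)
Your proof is correct and follows essentially the same route as the paper: the forward direction mirrors \cref{thm:fe} (with the extra observation that \cref{assumption:affine} forces $\mathcal{F}$ to contain all affine maps), and in the converse you identify $G(y,z)=z$ as the affine invariant of $g=(f,0)$ whose preservation supplies the step that functional equivariance alone cannot. Your final paragraph accurately diagnoses why the additional affine-invariants hypothesis is needed in the additive setting.
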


\begin{proof}
  $ ( \Rightarrow ) $ Suppose $\Phi$ preserves
  $\mathcal{F}$-invariants. The proof of $\mathcal{F}$-functional
  equivariance is essentially identical to that in \cref{thm:fe}, and
  preservation of affine invariants follows from the fact that
  $\mathcal{F}$ contains affine maps by \cref{assumption:affine}.

  $ ( \Leftarrow ) $ Conversely, suppose that $\Phi$ is
  $\mathcal{F}$-functionally equivariant and preserves affine
  invariants. If $ F \in \mathcal{F} ( Y , Z ) $ is an invariant of
  $ f \in \mathfrak{X} ( Y ) $, then
  $ g ^{ [\nu] } (y, z ) = \bigl( f ^{[\nu]} (y) , F ^\prime (y) f ^{
    [\nu] } (y) \bigr) $ is the corresponding decomposition of
  $ g = ( f, 0 ) $. By $\mathcal{F}$-functional equivariance, we have
  $ \Phi _{ g ^{ [1] } , \ldots, g ^{ [N] } } \colon \bigl( y _0, F (
  y _0 ) \bigr) \mapsto \bigl( y _1, F ( y _1 ) \bigr) $. Finally,
  since $ G ( y, z ) = z $ is an affine invariant of $g$, it is
  preserved by $ \Phi _{ g ^{ [1] } , \ldots, g ^{ [N] } } $, and thus
  $ F ( y _0 ) = F ( y _1 ) $.
\end{proof}

\begin{example}
  \label{ex:ark_quadratic}
  Let $\mathcal{F}$ be the class of quadratic maps. It follows that an
  additive numerical integrator preserves quadratic invariants if and
  only if it is quadratic functionally equivariant and preserves
  affine invariants. For ARK methods, a sufficient condition is that
  $ b _i ^{ [\nu ] } = b _i $ be independent of $\nu$ and
  $ b _i ^{ [\nu] } a _{ i j } ^{[\mu]} + b _j ^{[\mu]} a _{ j i }
  ^{[\nu]} = b _i ^{ [\nu] } b _j ^{ [\mu] } $ for all $i$, $j$,
  $\mu$, $\nu$. The proof is identical to that for symplecticity of
  ARK methods, cf.~\citet*[Theorem 7]{ArMuSa1997}.
\end{example}

Splitting methods take $ \Phi _{ f ^{ [1] }, \cdots , f ^{ [N] } } $
to be a composition of exact flows $ \varphi _{ \tau f ^{ [\nu] } } $,
i.e.,
\begin{equation*}
  \Phi _{ f ^{ [1] } , \ldots, f ^{ [N] } } = \varphi _{ \tau _s f ^{ [\nu _s] } } \circ \cdots \circ \varphi _{ \tau _1 f ^{ [\nu _1] } } ,
\end{equation*}
where consistency requires $ \sum _{ \nu _i = \nu } \tau _i = 1 $ for
all $ \nu = 1 , \ldots, N $. For $ N = 2 $, the two most elementary
splitting methods are the Lie--Trotter splitting
$ \varphi _{ f ^{ [1] } } \circ \varphi _{ f ^{ [2] } } $ and the
Strang splitting
$ \varphi _{ \frac{1}{2} f ^{ [2] } } \circ \varphi _{ f ^{ [1] } }
\circ \varphi _{ \frac{1}{2} f ^{ [2] } } $, where $\varphi$ denotes
the exact time-$1$ flow. Since the exact flow is equivariant (and
hence functionally equivariant) with respect to \emph{all} maps $F$,
the chain rule implies that this is also true of splitting methods. As
a consequence of \cref{thm:n-fe}, we get the following
negative result for splitting methods.

\begin{corollary}
  Any splitting method that preserves affine invariants equals the
  exact flow.
\end{corollary}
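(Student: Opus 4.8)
The plan is to obtain the corollary from \cref{thm:n-fe} by augmenting the phase space with a ``time'' coordinate. Recall the observation just before the corollary: since the exact flow is equivariant (hence functionally equivariant) with respect to \emph{every} G\^ateaux differentiable map, the chain rule makes the same true of any splitting method. So let $\mathcal{F}$ be the class of all G\^ateaux differentiable maps; this satisfies \cref{assumption:affine}, since it contains every identity map, is a vector space, and is closed under pre- and post-composition with affine maps. Hence a splitting method $\Phi$ is $\mathcal{F}$-functionally equivariant, and if it moreover preserves affine invariants---the standing hypothesis---then \cref{thm:n-fe} implies that $\Phi$ preserves \emph{all} invariants, not merely affine or polynomial ones.

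Next I would fix a decomposition $f = f^{[1]} + \cdots + f^{[N]}$ on a Banach space $Y$ and a point $y_0 \in Y$, and reduce the desired identity $\Phi_{f^{[1]}, \ldots, f^{[N]}}(y_0) = \varphi_f(y_0)$ to preservation of one explicit invariant on an augmented system. On $Y \times \mathbb{R}$, set $\hat{f}^{[1]}(y,t) = \bigl(f^{[1]}(y), 1\bigr)$ and $\hat{f}^{[\nu]}(y,t) = \bigl(f^{[\nu]}(y), 0\bigr)$ for $\nu = 2, \ldots, N$, so that $\hat{f} \coloneqq \sum_{\nu} \hat{f}^{[\nu]} = (f, 1)$. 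The $t$-component velocity of each $\hat{f}^{[\nu]}$ is a constant $c_\nu$ (with $c_1 = 1$ and $c_\nu = 0$ otherwise), so the exact flow of $\tau \hat{f}^{[\nu]}$ decouples as $(y,t) \mapsto \bigl(\varphi_{\tau f^{[\nu]}}(y),\, t + \tau c_\nu\bigr)$. Composing the stages of $\Phi$ and using the consistency relation $\sum_{\nu_i = 1} \tau_i = 1$ then gives $\Phi_{\hat{f}^{[1]}, \ldots, \hat{f}^{[N]}}(y_0, 0) = \bigl(\Phi_{f^{[1]}, \ldots, f^{[N]}}(y_0),\, 1\bigr)$.

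Finally I would take $F \colon Y \times \mathbb{R} \to Y$, $F(y,t) = \varphi_{-tf}(y)$ (the exact flow of $f$ run backward for time $t$), which is defined and G\^ateaux differentiable near $(y_0, 0)$. Differentiating and using the identity $(\varphi_{sf})^\prime(y)\, f(y) = f\bigl(\varphi_{sf}(y)\bigr)$ shows $F^\prime(y,t)\,\hat{f}(y,t) = f\bigl(\varphi_{-tf}(y)\bigr) - f\bigl(\varphi_{-tf}(y)\bigr) = 0$, so $F$ is an invariant of $\hat{f}$. Since $\Phi$ preserves all invariants, $F\bigl(\Phi_{f^{[1]}, \ldots, f^{[N]}}(y_0),\, 1\bigr) = F(y_0, 0) = y_0$, i.e.\ $\varphi_{-f}\bigl(\Phi_{f^{[1]}, \ldots, f^{[N]}}(y_0)\bigr) = y_0$; applying $\varphi_f$ to both sides yields $\Phi_{f^{[1]}, \ldots, f^{[N]}}(y_0) = \varphi_f(y_0)$, as claimed. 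I expect the only real obstacle to be conceptual: recognizing that preservation of \emph{affine} invariants already forces preservation of the transcendental invariant $\varphi_{-tf}(y)$ of the time-augmented system, and that the consistency condition is precisely what advances $t$ by one unit under the augmented splitting method. Subtleties about the domain of $\varphi_{-tf}$ are handled exactly as for implicit methods, by treating $\Phi_f$ as a partial function in the sense of the paper's footnote.
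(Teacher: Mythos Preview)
Your argument is correct and follows essentially the same route as the paper: invoke \cref{thm:n-fe} to upgrade ``preserves affine invariants'' to ``preserves all invariants,'' then augment by a time variable and use a flow-based invariant of $(f,1)$ to pin down $y_1 = \varphi_f(y_0)$.

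Two points are worth noting. First, your choice of invariant $F(y,t) = \varphi_{-tf}(y)$ is actually cleaner than the paper's $F(y,t) = y - \varphi_{tf}(y_0)$: yours satisfies $F'(y,t)\,(f(y),1) = 0$ for \emph{all} $(y,t)$, whereas the paper's function has derivative $f(y) - f(\varphi_{tf}(y_0))$ along $(f,1)$, which vanishes only on the particular integral curve through $(y_0,0)$---so, read literally, it is not an invariant in the sense used elsewhere in the paper. Second, you are more explicit than the paper about how the augmented decomposition $\hat f^{[1]} = (f^{[1]},1)$, $\hat f^{[\nu]} = (f^{[\nu]},0)$ interacts with the splitting scheme, and in particular how the consistency condition $\sum_{\nu_i = 1}\tau_i = 1$ guarantees the $t$-component advances by exactly $1$; the paper leaves this implicit. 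Both refinements are improvements rather than genuine departures.
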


\begin{proof}
  Since splitting methods are equivariant with respect to all maps,
  \cref{thm:n-fe} implies that any splitting method
  preserving affine invariants preserves \emph{all} invariants. To see
  that this must be the exact flow, consider the vector field
  $ ( f, 1 ) \in \mathfrak{X} ( Y \times \mathbb{R} ) $, which
  augments $ \dot{y} = f (y) $ by the equation $ \dot{ t } = 1 $. The
  exact solution is $ y (t) = \varphi _{ t f } (y _0) $, so
  $ F ( y, t ) = y - \varphi _{ t f } (y _0) $ is an invariant of
  $ ( f, 1 ) $. Therefore, $ F ( y _1 , 1 ) = F ( y _0 , 0 ) = 0 $,
  which says that $ y _1 = \varphi _f ( y _0 ) $.
\end{proof}

\subsection{Partitioned methods}
\label{sec:partitioned}

We finally consider partitioned methods, which are based on a
partitioning $ Y = Y ^{ [1] } \oplus \cdots \oplus Y ^{ [N] } $. In
particular, we have in mind partitioned Runge--Kutta and P-series
methods (cf.~\citet{Hairer1980}). These are closely related to the
methods in the previous section, except the vector field decomposition
$ f = f ^{ [1] } + \cdots + f ^{ [N] } $ is uniquely specified by the
partitioning of $Y$, i.e., $ f ^{ [\nu] } (y) \in Y ^{ [\nu] } $ for
all $ y \in Y $ and $ \nu = 1 , \ldots, N $. For this reason, we write
the flow of such a method as $ \Phi _f $ rather than
$ \Phi _{ f ^{ [1] } , \ldots, f ^{ [N] } } $. By a partitioned
numerical integrator, we mean the entire collection of maps
$ \Phi = \bigl\{ \Phi _f : f \in \mathfrak{X} ( Y ),\ Y = \bigoplus _{
  \nu = 1 } ^N Y ^{[\nu]} \text{ a partitioned Banach space} \bigr\}
$.

\begin{definition}
  Given partitioned spaces
  $ Y = \bigoplus _{ \nu = 1 } ^N Y ^{[\nu]} $ and
  $ U = \bigoplus _{ \nu = 1 } ^N U ^{ [\nu] } $, we say that
  $ A \colon Y \rightarrow U $ is a \emph{P-affine map} if it
  decomposes as $ A = \bigoplus _{ \nu = 1 } ^N A ^{ [\nu] } $, where
  each $ A ^{ [\nu] } \colon Y ^{ [\nu] } \rightarrow U ^{ [\nu] } $
  is affine. A partitioned numerical integrator $\Phi$ is \emph{P-affine
    equivariant} if $ A \circ \Phi _f = \Phi _g \circ A $ whenever
  $ f ^{ [\nu] } $ and $ g ^{ [\nu] } $ are $ A $-related
  for all $ \nu = 1 , \ldots, N $, all P-affine maps $A$, all
  partitionings, and all Banach spaces $Y$ and $U$.
\end{definition}

\begin{example}
  \label{ex:p-affine}
  If we partition $ U = \mathbb{R} $ into
  $ U ^{ [\mu] } = \mathbb{R} $ and $ U ^{ [\nu] } = \{ 0 \} $ for
  $ \nu \neq \mu $, then the P-affine functionals are those
  depending only on $ Y ^{ [\mu] } $. Affine functionals depending on
  more than one component $ Y ^{ [\nu] } $ cannot be P-affine for
  any partitioning of $\mathbb{R}$.  In particular, if we take
  $ Y = \mathbb{R}^2 = \bigl( \mathbb{R} \times \{ 0 \} \bigr) \oplus
  \bigl( \{ 0 \} \times \mathbb{R} \bigr) $, then:
  \begin{itemize}
  \item $ ( q, p ) \mapsto q $ is P-affine for the partitioning $ U = \mathbb{R}  \oplus \{ 0 \} $;
  \item $ ( q, p ) \mapsto p $ is P-affine for the partitioning
    $ U = \{0 \} \oplus \mathbb{R} $;
  \item $ ( q, p ) \mapsto q + p $ is never P-affine.
  \end{itemize} 
\end{example}

\begin{proposition}
  \label{prop:n-affine_implies_p-affine}
  If an additive numerical integrator $ \Psi $ is $N$-affine
  equivariant, then the partitioned numerical integrator $\Phi$
  defined by $\Phi _f = \Psi _{ f ^{ [1] } , \ldots, f ^{ [N] } } $ is
  P-affine equivariant.
\end{proposition}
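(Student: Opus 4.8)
The plan is to deduce P-affine equivariance of $\Phi$ directly from $N$-affine equivariance of $\Psi$, the essential observation being that a P-affine map is in particular an affine map and that the partition-induced decomposition of a vector field is exactly the sort of decomposition to which $\Psi$ applies. First I would fix partitioned Banach spaces $Y = \bigoplus_{\nu} Y^{[\nu]}$ and $U = \bigoplus_{\nu} U^{[\nu]}$, a P-affine map $A = \bigoplus_{\nu} A^{[\nu]} \colon Y \to U$, and vector fields $f \in \mathfrak{X}(Y)$, $g \in \mathfrak{X}(U)$ whose unique partition components $f^{[\nu]}$ (valued in $Y^{[\nu]}$) and $g^{[\nu]}$ (valued in $U^{[\nu]}$) are $A$-related for every $\nu$. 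The only point to record at this stage is that a direct sum of affine maps is affine, so $A$ is an affine map in the sense of \cref{def:ae}; thus the hypotheses of $N$-affine equivariance for $\Psi$ are satisfied for this $A$ and these $2N$ vector fields.

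Next I would simply invoke $N$-affine equivariance of $\Psi$ to obtain $A \circ \Psi_{f^{[1]},\ldots,f^{[N]}} = \Psi_{g^{[1]},\ldots,g^{[N]}} \circ A$, and then rewrite each side via the defining relation $\Phi_f = \Psi_{f^{[1]},\ldots,f^{[N]}}$ --- and likewise $\Phi_g = \Psi_{g^{[1]},\ldots,g^{[N]}}$, which is legitimate since $\sum_{\nu} g^{[\nu]} = g$. This gives $A \circ \Phi_f = \Phi_g \circ A$, which is exactly P-affine equivariance of $\Phi$.

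I do not expect any real obstacle here; the statement is a translation between two equivariance notions, so the work is entirely bookkeeping. The one thing to be careful about is the compatibility of the block structure: because $A$ is block-diagonal, its derivative $A'$ carries $Y^{[\nu]}$ into $U^{[\nu]}$, so the pairing $f^{[\nu]} \leftrightarrow g^{[\nu]}$ is the one compatible with both partitionings, and the componentwise $A$-relatedness is moreover consistent with the overall $A$-relatedness of $f$ and $g$. I might spell this out in a sentence but would not belabor it.
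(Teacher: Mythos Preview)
Your proposal is correct and follows exactly the paper's approach: the paper's proof is the single sentence ``This follows immediately from the definitions, since P-affine maps are affine,'' and your argument is precisely the unpacking of that sentence.
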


\begin{proof}
  This follows immediately from the definitions, since P-affine maps
  are affine.
\end{proof}

\begin{example}[partitioned Runge--Kutta methods]
  An $s$-stage partitioned Runge--Kutta method (PRK) is just the
  application of an ARK method to a partitioned space, as in
  \cref{prop:n-affine_implies_p-affine}, where $\Phi$ is the PRK
  method and $ \Psi $ is the ARK method. As an immediate corollary of
  this proposition, all PRK methods are P-affine equivariant.
\end{example}

The definition of $F$- and $\mathcal{F}$-functional equivariance is
the same as in \cref{def:fe}, where given
$ Y = \bigoplus _{ \nu = 1 } ^N Y ^{[\nu]} $ and
$ Z = \bigoplus _{ \nu = 1 } ^N Z ^{[\nu]} $, we partition
$ Y \times Z = \bigoplus _{ \nu = 1 } ^N ( Y ^{[\nu]} \times Z ^{
  [\nu] } ) $. However, the methods being considered are not
necessarily equivariant with respect to all affine maps, so
\cref{assumption:affine} is too restrictive on $\mathcal{F}$. We
therefore replace it with the following, which just replaces
``affine'' by ``P-affine'' for specified partitions.

\begin{assumption}
  \label{assumption:p-affine}
  Assume that:
  \begin{itemize}
  \item $\mathcal{F} (Y,Y) $ contains the identity map for all
    $Y = \bigoplus _{ \nu = 1 } ^N Y ^{[\nu]}$;
  \item $ \mathcal{F} ( Y, Z ) $ is a vector space for all
    $Y = \bigoplus _{ \nu = 1 } ^N Y ^{[\nu]}$ and
    $Z=\bigoplus _{ \nu = 1 } ^N Z ^{[\nu]}$;
  \item $ \mathcal{F} $ is invariant under composition with P-affine
    maps, in the following sense: If $ A \colon Y \rightarrow U $ and
    $ B \colon V \rightarrow Z $ are P-affine and
    $ F \in \mathcal{F} ( U, V ) $, then
    $ B \circ F \circ A \in \mathcal{F} ( Y, Z ) $, for all
    $Y = \bigoplus _{ \nu = 1 } ^N Y ^{[\nu]}$,
    $Z = \bigoplus _{ \nu = 1 } ^N Z ^{[\nu]}$,
    $U = \bigoplus _{ \nu = 1 } ^N U ^{[\nu]}$, and
    $V = \bigoplus _{ \nu = 1 } ^N V ^{[\nu]}$.
  \end{itemize}
\end{assumption}

\begin{theorem}
  \label{thm:p-fe}
  Let $\mathcal{F}$ satisfy \cref{assumption:p-affine}. A partitioned
  numerical integrator $\Phi$ preserves $\mathcal{F}$-invariants if
  and only if it is $\mathcal{F}$-functionally equivariant.
\end{theorem}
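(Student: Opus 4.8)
The plan is to follow the same two-directional argument as in the proof of \cref{thm:fe}, simply substituting P-affine maps for general affine maps at each point where equivariance is invoked, and verifying that \cref{assumption:p-affine} supplies exactly the closure properties needed. First I would prove the forward direction ($\Rightarrow$): assuming $\Phi$ preserves $\mathcal{F}$-invariants, take $ F \in \mathcal{F} ( Y, Z ) $ with $ Y = \bigoplus Y ^{[\nu]} $ and $ Z = \bigoplus Z ^{[\nu]} $, and form $ G ( y, z ) = F (y) - z $ on $ Y \times Z $, partitioned as $ \bigoplus ( Y ^{[\nu]} \times Z ^{[\nu]} ) $. The point is that the projection $ ( y, z ) \mapsto y $ and the negation/projection $ ( y, z ) \mapsto -z $ are both P-affine for this partitioning, so $ G \in \mathcal{F} ( Y \times Z, Z ) $ by the composition-closure clause of \cref{assumption:p-affine} (together with the vector-space clause). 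Since $ G $ is an invariant of the augmented vector field $ g ( y, z ) = \bigl( f (y) , F ^\prime (y) f (y) \bigr) $ exactly as computed in \cref{thm:fe}, invariant-preservation gives $ F ( y _1 ) - z _1 = F ( y _0 ) - z _0 $, hence $ z _0 = F ( y _0 ) $ implies $ z _1 = F ( y _1 ) $, which is $F$-functional equivariance.

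For the reverse direction ($\Leftarrow$), suppose $ \Phi $ is $\mathcal{F}$-functionally equivariant and $ F \in \mathcal{F} ( Y, Z ) $ is an invariant of $ f $. Then the augmented vector field is $ g = ( f, 0 ) $, and functional equivariance gives $ \Phi _g \colon \bigl( y _0, F ( y _0 ) \bigr) \mapsto \bigl( y _1 , F ( y _1 ) \bigr) $. As in \cref{thm:fe}, every constant map $ y \mapsto c \in Z $ lies in $ \mathcal{F} ( Y, Z ) $ (it is a P-affine map composed with anything, or directly an element by the vector-space axiom, being $ 0 $ shifted; more carefully, $ c $ is the image of $ 0 \in \mathcal{F}(Y,Z) $ under post-composition with the P-affine translation $ z \mapsto z + c $, where we partition the target so the translation is P-affine) and has the same augmented vector field $ g = ( f, 0 ) $, so $ \Phi _g \colon ( y _0, c ) \mapsto ( y _1 , c ) $ for all $ c $. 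Comparing with the previous display at $ c = F ( y _0 ) $ forces $ F ( y _1 ) = F ( y _0 ) $. Note that, unlike in \cref{thm:n-fe}, no separate hypothesis of preserving affine invariants is needed here: a partitioned method records only the vector field $ f $, not an independent decomposition, so the augmented system for a constant observable is genuinely $ ( f, 0 ) $ and the argument closes directly.

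The one subtlety — and the step I expect to require the most care — is bookkeeping the partitionings so that the maps invoked really are P-affine. In the forward direction, $ Y \times Z $ must be given the partitioning $ \bigoplus _\nu ( Y ^{[\nu]} \times Z ^{[\nu]} ) $, and one must check that both $ ( y, z ) \mapsto y $ (valued in $ Y $ with its original partitioning) and $ ( y, z ) \mapsto z $ (valued in $ Z $) decompose componentwise — which they do, since each restricts on $ Y ^{[\nu]} \times Z ^{[\nu]} $ to a coordinate projection. The difference map $ G = F - \mathrm{id}_Z \circ (\text{proj}_Z) $ is then a sum of two elements of $ \mathcal{F}(Y\times Z, Z) $ obtained via the P-affine composition clause, so it lies in $ \mathcal{F}(Y\times Z, Z) $ by the vector-space clause. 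In the reverse direction, one similarly checks that the constant maps and the translations used to produce them respect the given partitionings. Once these routine compatibility checks are dispatched, the proof is a verbatim transcription of \cref{thm:fe} with ``affine'' replaced by ``P-affine.''

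\begin{proof}
  $( \Rightarrow ) $ Suppose $\Phi$ preserves $\mathcal{F}$-invariants. Given
  $ F \in \mathcal{F} ( Y, Z ) $ with $ Y = \bigoplus _{ \nu = 1 } ^N Y ^{[\nu]} $
  and $ Z = \bigoplus _{ \nu = 1 } ^N Z ^{[\nu]} $, partition
  $ Y \times Z = \bigoplus _{ \nu = 1 } ^N ( Y ^{[\nu]} \times Z ^{[\nu]} ) $. The
  projection $ ( y, z ) \mapsto y $ and the map $ ( y, z ) \mapsto -z $ are both
  P-affine for this partitioning, so \cref{assumption:p-affine} implies
  $ G ( y, z ) = F (y) - z $ is in $ \mathcal{F} ( Y \times Z , Z ) $. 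As in
  \cref{thm:fe}, $G$ is an invariant of the augmented vector field
  $ g ( y, z ) = \bigl( f (y) , F ^\prime (y) f (y) \bigr) $, since
  $ G ^\prime ( y, z ) g ( y, z ) = F ^\prime (y) f (y) - F ^\prime (y) f (y) = 0 $.
  Hence, preservation of $\mathcal{F}$-invariants implies
  $ \Phi _g \colon ( y _0, z _0 ) \mapsto ( y _1 , z _1 ) $ satisfies
  $ F ( y _1 ) - z _1 = F ( y _0 ) - z _0 $. In particular, $ z _0 = F ( y _0 ) $
  implies $ z _1 = F ( y _1 ) $.

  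$ ( \Leftarrow ) $ Conversely, suppose $\Phi$ is $\mathcal{F}$-functionally
  equivariant. If $ F \in \mathcal{F} ( Y, Z ) $ is an invariant of
  $ f \in \mathfrak{X} ( Y ) $, then the augmented vector field is $ g = ( f, 0 ) $,
  and $\mathcal{F}$-functional equivariance implies
  $ \Phi _g \colon \bigl( y _0, F ( y _0 ) \bigr) \mapsto \bigl( y _1 , F ( y _1 ) \bigr) $.
  For any $ c \in Z $, the constant map $ y \mapsto c $ is the image of
  $ 0 \in \mathcal{F} ( Y, Z ) $ under post-composition with the P-affine translation
  $ z \mapsto z + c $, hence lies in $ \mathcal{F} ( Y, Z ) $ by
  \cref{assumption:p-affine}; it has the same augmented vector field $g$, so
  $ \Phi _g \colon ( y _0, c ) \mapsto ( y _1 , c ) $. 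Taking $ c = F ( y _0 ) $ and
  comparing gives $ F ( y _1 ) = F ( y _0 ) $.
\end{proof}
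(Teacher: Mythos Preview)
Your proposal is correct and takes exactly the approach the paper does: the paper's proof is the single sentence ``The proof is formally identical to that for \cref{thm:fe},'' and you have faithfully spelled out that transcription, including the P-affine bookkeeping on $Y \times Z = \bigoplus_\nu (Y^{[\nu]} \times Z^{[\nu]})$. Your remark explaining why the extra affine-invariant hypothesis of \cref{thm:n-fe} is unnecessary here---because the partitioning pins down the decomposition of $g = (f,0)$ uniquely---is a nice clarification that the paper leaves implicit.
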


\begin{proof}
  The proof is formally identical to that for \cref{thm:fe}.
\end{proof}

\begin{example}
  Let $\mathcal{F}$ be the class of P-affine maps. It follows that all
  P-affine equivariant methods preserve P-affine invariants. In
  particular, by \cref{ex:p-affine}, affine invariants
  $ F \colon Y \rightarrow \mathbb{R} $ depending only on a single
  component $ Y ^{ [\mu] } $ are preserved.
\end{example}

\begin{example}
  \label{ex:prk_affine}
  Let $\mathcal{F}$ be the class of \emph{all} affine maps,
  irrespective of partitioning. It follows that P-affine equivariant
  methods preserve affine invariants if and only if they are affine
  functionally equivariant. For PRK methods, as for ARK methods, this
  holds if $ b _i ^{ [\nu] } = b _i $ is independent of $\nu$. (See
  \cref{ex:ark,prop:ark}.)
\end{example}

\begin{example}
  \label{ex:bilinear}
  Let $\mathcal{F}$ be the class of quadratic maps that are at most
  bilinear with respect to the partition. i.e., terms may be bilinear
  in $ y ^{ [\mu] } $ and $ y ^{ [\nu] } $ for $ \mu \neq \nu $. For
  PRK methods, a sufficient condition for $\mathcal{F}$-invariant
  preservation, and thus for $\mathcal{F}$-functional equivariance, is
  that $ b _i ^{ [\nu ] } = b _i $ be independent of $\nu$ and
  $ b _i ^{ [\nu] } a _{ i j } ^{[\mu]} + b _j ^{[\mu]} a _{ j i }
  ^{[\nu]} = b _i ^{ [\nu] } b _j ^{ [\mu] } $ for all $i$, $j$, and
  $ \mu \neq \nu $. This is a straightforward generalization of the
  $ N = 2 $ case, cf.~\citet*[Theorem IV.2.4]{HaLuWa2006}.
\end{example}

\begin{example}
  Let $\mathcal{F}$ be the class of \emph{all} quadratic maps,
  irrespective of partitioning. For PRK methods, as for ARK methods, a
  sufficient condition for quadratic invariant preservation, and thus
  for quadratic functional equivariance, is that
  $ b _i ^{ [\nu ] } = b _i $ be independent of $\nu$ and
  $ b _i ^{ [\nu] } a _{ i j } ^{[\mu]} + b _j ^{[\mu]} a _{ j i }
  ^{[\nu]} = b _i ^{ [\nu] } b _j ^{ [\mu] } $ for all $i$, $j$,
  $\mu$, $\nu$. (See \cref{ex:ark_quadratic}.)
\end{example}

\subsection{Closure under differentiation and (multi)symplecticity}

Finally, we generalize \cref{thm:differentiation}, which allows the
functional equivariance results for $N$-affine and P-affine
equivariant methods to be applied to observables depending on
variations.

\begin{theorem}
  \label{thm:differentiation_additive_partitioned}
  $N$-affine and P-affine equivariant methods are closed under differentiation.
\end{theorem}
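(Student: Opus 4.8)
The plan is to mimic the proof of \cref{thm:differentiation} as closely as possible, replacing the single affine map (a projection) by a P-affine or $N$-tuple of affine maps as appropriate. For the additive ($N$-affine) case: given a decomposed vector field $f = f^{[1]} + \cdots + f^{[N]}$ on $Y$, form the doubled system on $Y \times Y$ with vector field $f \times f$ decomposed as $f^{[\nu]} \times f^{[\nu]}$. Each summand $f^{[\nu]} \times f^{[\nu]}$ is $A$-related to $f^{[\nu]}$ for $A$ either projection, so $N$-affine equivariance gives $\Phi_{f^{[1]} \times f^{[1]}, \ldots, f^{[N]} \times f^{[N]}} = \Phi_{f^{[1]}, \ldots, f^{[N]}} \times \Phi_{f^{[1]}, \ldots, f^{[N]}}$. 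Then, exactly as before, introduce $z = (x - y)/\epsilon$, giving the augmented decomposition $g^{[\nu]}(x,y,z) = \bigl(f^{[\nu]}(x), f^{[\nu]}(y), (f^{[\nu]}(x) - f^{[\nu]}(y))/\epsilon\bigr)$. Since the map $(x,y) \mapsto (x - y)/\epsilon$ is affine and $g^{[\nu]}$ is $(\mathrm{id},\mathrm{id},F)$-related to $f^{[\nu]} \times f^{[\nu]}$ for this $F$, \cref{prop:N-afe} (affine functional equivariance of $N$-affine equivariant methods) applies to give $z_1 = (\Phi(x_0) - \Phi(y_0))/\epsilon$. Setting $x_0 = y_0 + \epsilon \eta_0$ and letting $\epsilon \to 0$ yields $\eta_1 = \Phi_f'(y_0)\eta_0$ applied to the variational system $\dot\eta = (f^{[1]} + \cdots + f^{[N]})'(y)\eta$, which decomposes as $\dot\eta = \sum_\nu (f^{[\nu]})'(y)\eta$ — precisely the additive decomposition of the variational equation.

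For the partitioned (P-affine) case the argument is the same but requires care with partitions. Given $f \in \mathfrak{X}(Y)$ on $Y = \bigoplus_\nu Y^{[\nu]}$, partition $Y \times Y = \bigoplus_\nu (Y^{[\nu]} \times Y^{[\nu]})$; then the two projections $Y \times Y \to Y$ are P-affine, so P-affine equivariance gives $\Phi_{f \times f} = \Phi_f \times \Phi_f$, where the partition-induced decomposition of $f \times f$ is exactly $(f \times f)^{[\nu]} = f^{[\nu]} \times f^{[\nu]}$. The subtle point is the choice of target partition for $F(x,y) = (x-y)/\epsilon$: partition $Z = Y$ by $Z^{[\nu]} = Y^{[\nu]}$ and observe that $F$ is then P-affine, since it decomposes componentwise as $F^{[\nu]}(x^{[\nu]}, y^{[\nu]}) = (x^{[\nu]} - y^{[\nu]})/\epsilon$. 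Hence the map $(\mathrm{id}, \mathrm{id}, F)$ on $Y \times Y \times Z$ respects the partition $\bigoplus_\nu (Y^{[\nu]} \times Y^{[\nu]} \times Z^{[\nu]})$, the partition-induced decomposition of $g(x,y,z) = \bigl(f(x), f(y), (f(x)-f(y))/\epsilon\bigr)$ is $g^{[\nu]}(x,y,z) = \bigl(f^{[\nu]}(x), f^{[\nu]}(y), (f^{[\nu]}(x) - f^{[\nu]}(y))/\epsilon\bigr)$, and by \cref{prop:n-affine_implies_p-affine} together with \cref{prop:N-afe} the P-affine equivariant method is affine functionally equivariant, giving $z_1 = (\Phi_f(x_0) - \Phi_f(y_0))/\epsilon$ just as before. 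Take $\epsilon \to 0$.

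I expect the main obstacle to be purely bookkeeping: one must verify that, in each case, the partition-induced (resp. fixed additive) decomposition of the augmented vector field on $Y \times Y \times Z$ really is the "obvious" one, $g^{[\nu]} = \bigl(f^{[\nu]}(x), f^{[\nu]}(y), (f^{[\nu]}(x) - f^{[\nu]}(y))/\epsilon\bigr)$, and that the relevant maps ($(x,y) \mapsto (x-y)/\epsilon$, the projections, and $(\mathrm{id},\mathrm{id},F)$) are genuinely P-affine for the chosen partitions rather than merely affine. For partitioned methods this is where one could go wrong — an affine functional like $(x,y) \mapsto (x-y)/\epsilon$ is P-affine only because it is built componentwise, which is exactly why partitioning $Z$ to match $Y$ is the right move. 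Everything else — the $\epsilon \to 0$ limit, the identification of the limiting equation as the (decomposed) variational equation, and the reduction to \cref{prop:N-afe} / affine functional equivariance — is identical to the proof of \cref{thm:differentiation}, so once the decompositions are pinned down the proof writes itself. A clean way to present this is to handle the $N$-affine case in full and then remark that the P-affine case follows by the same argument with "affine" replaced by "P-affine" and with $Z$ partitioned to match $Y$, invoking \cref{prop:n-affine_implies_p-affine}.
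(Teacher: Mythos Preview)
Your approach is essentially the same as the paper's: specify the obvious decomposition (for $N$-affine) or partition $Y \times Y \times Y = \bigoplus_\nu (Y^{[\nu]} \times Y^{[\nu]} \times Y^{[\nu]})$ (for P-affine), then rerun the argument of \cref{thm:differentiation}. Your careful verification that the projections and $F(x,y)=(x-y)/\epsilon$ are P-affine for the chosen partitions is exactly what the paper leaves implicit.

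There is one slip in the P-affine case. You write that ``by \cref{prop:n-affine_implies_p-affine} together with \cref{prop:N-afe} the P-affine equivariant method is affine functionally equivariant.'' This is wrong: \cref{prop:n-affine_implies_p-affine} goes in the direction $N$-affine $\Rightarrow$ P-affine, not the reverse, and P-affine equivariant methods are \emph{not} affine functionally equivariant in general (see \cref{ex:p-affine} and \cref{ex:prk_affine}). Fortunately you do not need this. You have already shown that $F$ is P-affine and that $(\mathrm{id},F)$ respects the partition, with $f\times f$ and $g$ being $(\mathrm{id},F)$-related component by component; the conclusion $z_1 = (\Phi_f(x_0)-\Phi_f(y_0))/\epsilon$ then follows directly from P-affine equivariance itself, with no appeal to \cref{prop:N-afe} needed. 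Replace the erroneous citation with a direct appeal to the definition of P-affine equivariance and the proof is complete.
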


\begin{proof}
  The proof is basically the same as \cref{thm:differentiation},
  although we need to specify how $\Phi$ is applied to the augmented
  system
  \begin{equation*}
    \dot{x} = f (x) , \qquad \dot{y} = f (y) , \qquad \dot{z} = \frac{ f (x) - f (y) }{ \epsilon } .
  \end{equation*}
  We simply use the same decomposition or partition for each of the
  three parts. Specifically, if $\Phi$ is $N$-affine equivariant, then
  we decompose
  \begin{equation*}
    f (x) = \sum _{ \nu = 1 } ^N f ^{ [\nu] } (x) ,\qquad f (y) = \sum _{ \nu = 1 } ^N f ^{ [\nu] } (y) , \qquad \frac{ f (x) - f (y) }{ \epsilon } = \sum _{ \nu = 1 } ^N \frac{ f ^{ [\nu] } (x) - f ^{ [\nu] } (y) }{ \epsilon } ,
  \end{equation*}
  while if $\Phi$ is P-affine equivariant, we partition
  $ Y \times Y \times Y = \bigoplus _{ \nu = 1 } ^N ( Y ^{ [\nu] }
  \times Y ^{ [\nu] } \times Y ^{ [\nu] } ) $. The proof then proceeds
  as in \cref{thm:differentiation}.
\end{proof}

Therefore, the results on symplecticity and multisymplecticity of
affine equivariant methods preserving quadratic invariants also hold
for $N$-affine and P-affine equivariant methods preserving quadratic
invariants. Moreover, since the canonical symplectic form
$ \omega = \mathrm{d} q ^i \wedge \mathrm{d} p _i $ and
multisymplectic form
$ \omega ^0 = \mathrm{d} u ^i \wedge \mathrm{d} p _i $ are bilinear on
$ Y = V \times V ^\ast $, it suffices for an $ N = 2 $ partitioned
method to preserve only bilinear invariants, as in
\cref{ex:bilinear}. This includes widely used symplectic PRK methods
such as St\"ormer/Verlet and the Lobatto IIIA--IIIB pair
(\citet*[Sections IV.2 and VI.4]{HaLuWa2006}), as well as compositions
of these methods.

\section{Concluding remarks}

We conclude by posing a natural question for future investigation:
\emph{Which numerical integrators are affine functionally
  equivariant?} Here is a summary of some related results that have
been mentioned throughout this paper:
\begin{itemize}
\item B-series methods are precisely the \emph{affine equivariant}
  methods \citep{McMoMuVe2016}, so by \cref{prop:afe}, they are
  included among the affine functionally equivariant methods.

\item Aromatic B-series methods are precisely the \emph{affine
    isomorphism equivariant} methods \citep{MuVe2016}. Since only
  isomorphsims are considered, the series coefficients may vary
  depending on $ \dim Y $. If the series coefficients \emph{are}
  constant across dimensions, then the method is affine functionally
  equivariant, as in \cref{rmk:aromatic}. Conversely,
  variable-coefficient methods cannot be affine functionally
  equivariant, since $y$ would then evolve differently between the
  original and augmented systems.

\item As shown in \cref{ex:prk_affine}, partitioned methods may also
  be affine functionally equivariant, e.g., a PRK method with
  $ b _i ^{[\nu]} = b _i $ independent of $\nu$. However, such methods
  are generally not affine isomorphism equivariant, e.g., if
  $ a _{ i j } ^{[\nu]} $ varies with $\nu$, so affine functional
  equivariance need not imply affine isomorphism equivariance.
\end{itemize} 
\Cref{fig} depicts these relationships among the different classes of
``equivariant'' methods.

\begin{figure}
  \centering
  \begin{tikzpicture}[scale=2]
    \draw[rounded corners] (1.5,1.5) rectangle (3.5,3.5) {};
    \draw[rounded corners] (0.5,0.5) rectangle (4.5,4.5) {};
    \draw[rounded corners] (1,1) rectangle (6.5,4) {};
    \draw (1,4) node[below right] {Affine isomorphism equivariant methods};
    \draw (1.5,3.5) node[below right,text width=1cm] {Affine\\ equivariant\\ methods};
    \draw (0.5,4.5) node[below right] {Affine functionally equivariant methods};
    \draw (1.5,1.5) node[above right] {\em $\equiv$ B-series};
    \draw (1,1) node[above right, text width=4cm] {\em $\equiv$ constant-coefficient aromatic B-series};
    \draw (0.5,0.5) node[above right] {\em Example: PRK method with $b_i ^{[\nu]} = b_i$};
    \draw (4.5,1) node[above right, text width=4cm] {\em $\equiv$ variable-coefficient aromatic B-series};
  \end{tikzpicture}
  \caption{The landscape of equivariant methods.\label{fig}}
\end{figure}
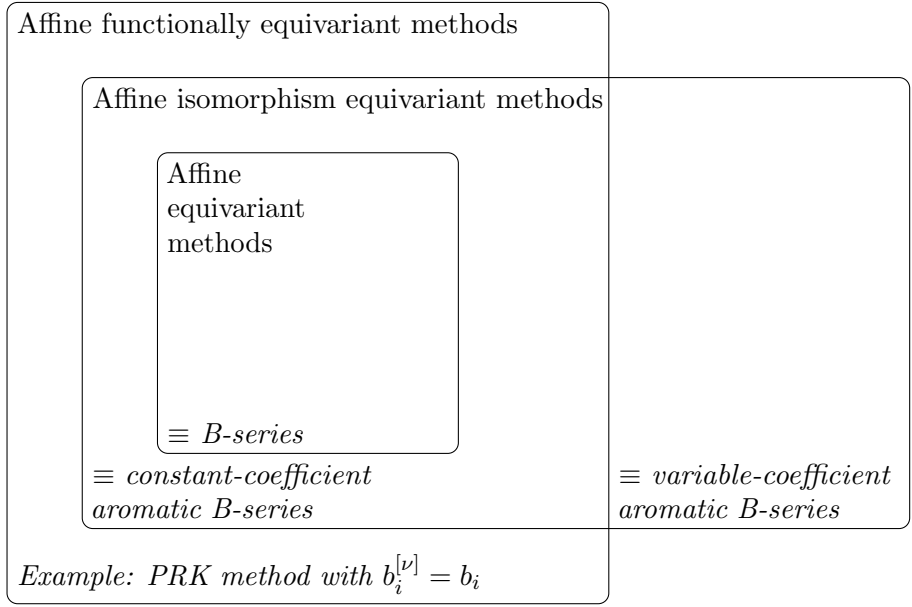
\subsection*{Acknowledgments}

The authors would like to thank the Isaac Newton Institute for
Mathematical Sciences for support and hospitality during the program
``Geometry, compatibility and structure preservation in computational
differential equations,'' when work on this paper was undertaken. This
program was supported by EPSRC grant number EP/R014604/1.

Robert McLachlan was supported in part by the Marsden Fund of the
Royal Society of New Zealand and by a fellowship from the Simons
Foundation. Ari Stern was supported in part by NSF grant DMS-1913272.

\footnotesize

\end{document}